\newtheorem{theorem}{Theorem}[section]
\newtheorem{lemma}[theorem]{Lemma}
\newtheorem{corollary}[theorem]{Corollary}
\newtheorem{proposition}[theorem]{Proposition}
\newtheorem{example}[theorem]{Example}
\journal{Linear Algebra and its Applications}
\begin{document}

\begin{frontmatter}



\title{Zero-dilation indices and numerical ranges}


\author{Kennett L. Dela Rosa}

\affiliation{organization={Institute of Mathematics, University of the Philippines Diliman},
            city={Quezon City},
            postcode={1101}, 
            state={NCR},
            country={Philippines}}

\begin{abstract}
The zero-dilation index $d(A) $ of a matrix $A$ is the largest integer $k$ for which $\begin{bmatrix}0_k& *\\ * & *\end{bmatrix}$ is unitarily similar to $A$. In this study, the zero-dilation indices of certain block matrices are considered, namely, the block matrix analogues of companion matrices and upper triangular KMS matrices, respectively shown as \[\mathcal{C}=\begin{bmatrix} 0& \bigoplus_{j=1}^{m-1}A_j \\ B_0& [B_j]_{j=1}^{m-1}\end{bmatrix}\ \textup{and}\ \mathcal{K}=\begin{bmatrix}0& A& A^2&\cdots& A^{m-1}\\ 0 & 0& A& \ddots& \vdots\\ 0& 0 &0 &\ddots& A^2\\ \vdots& \vdots &\vdots & \ddots& A\\ 0& 0 & 0& \cdots &0\end{bmatrix}\]
where $\mathcal{C}$ and $\mathcal{K}$ are $mn$-by-$mn$ and $A_j,B_j,A$ are $n$-by-$n$. Provided $\bigoplus_{j=1}^{m-1}A_j$ is nonsingular, it is proved that $d(\mathcal{C})$ satisfies the following: if $m\geq 3$ is odd (respectively, $m\geq 2$ is even), then $\frac{(m-1)n}{2}\leq d(\mathcal{C})\leq \frac{(m+1)n}{2}$ (respectively, $ d(\mathcal{C})= \frac{mn}{2}$). In the odd $m$ case, examples are given showing that it is possible to get as zero-dilation index each integer value between $\frac{(m-1)n}{2} $ and $\frac{(m+1)n}{2}$. On the other hand, $d(\mathcal{K})$ is proved to be equal to the number of nonnegative eigenvalues of $(\mathcal{K}+\mathcal{K}^*)/2$. Alternative characterizations of $d(\mathcal{K})$ are given. The circularity of the numerical range of $\mathcal{K} $ is also considered.
\end{abstract}


\begin{keyword} Numerical range\sep Higher rank numerical range \sep Zero-dilation index \sep  
Companion matrix \sep KMS matrix 


\MSC[2020] 15A45 \sep 15A60 \sep 15B99 \sep 47A12 \sep 47A20

\end{keyword}

\end{frontmatter}


\section{Introduction}
\label{}
A $k$-by-$k$ matrix $B$ \textit{dilates} to an $m$-by-$m$ matrix $A$ if $A$ is unitarily similar to a matrix of the form $\begin{bmatrix} B&*\\ *&*\end{bmatrix}$. The \textit{zero-dilation index} $d(A)$ of $A$ is the largest integer $k$ for which the $k$-by-$k$ zero matrix $0_k$ dilates to $A$. The zero-dilation index was introduced in \cite{hG14b}, and it naturally arises from the concept of higher rank numerical ranges. For a given $k\in \{1,\ldots,m\}$, the \textit{rank}-$k$ \textit{numerical range} of $A$, denoted $\Lambda_k(A)$, is defined as \[\Lambda_k(A):=\{\lambda\in \mathbb C:\ \lambda I_k\ \textup{dilates to}\ A\},\] where $I_k$ denotes the $k$-by-$k$ identity matrix. Hence, the zero-dilation index of $A$ is the largest $k$ for which $0\in \Lambda_k(A)$. The higher rank numerical range was first defined in the context of quantum error correction \cite{mC06, mC062}. When $k=1$, $\Lambda_1(A)$ is equal to the classical \textit{numerical range} $W(A)$, defined as
\[W(A):=\{x^*Ax:\ x\in \mathbb{C}^m,\ \|x\|=1\}.\]
At least a century ago, the combined proofs of Toeplitz and Hausdorff showed that $W(A)=\Lambda_1(A)$ is convex. In 2008, it was shown that $\Lambda_k(A)$ is convex for all $k$ (see \cite{cL08,hW08}). 

The zero-dilation index can be characterized as 
\[d(A)=\min\{i_{\geq 0}(\textup{Re}(e^{i\theta}A)):\ \theta\in\mathbb R\},\]
where $i_{\geq 0}(H)$ denotes the number of nonnegative eigenvalues of $H$ counting multiplicities (see \cite[Theorem 2.2]{hG14b} or \cite[Theorem 3.1]{cL08}). As a consequence, $d(A)\leq \lceil \frac{m}{2}\rceil$ provided that $\textup{nullity}(\textup{Re}(e^{i\theta }A))\leq 1$ for some $\theta\in \mathbb {R}$ \cite[Corollary 2.5]{hG14b}. In \cite{hG14b}, it was shown that an $m$-by-$m$ matrix $A$ is unitarily similar to $B\oplus 0_{3d(A)-2m}$ where $B\in M_{3(m-d(A))}$ and $d(B)=2(m-d(A))$ provided that $d(A)>\lfloor \frac{2m}{3}\rfloor.$ This was then used to obtain a characterization of $m$-by-$m$ matrices with $d(A)=m-1.$ The zero-dilation index has been computed for the following classes of matrices: normal matrices,  weighted permutation matrices (with zero diagonals), $S_n$-matrices, companion matrices, and KMS matrices \cite{hG14b,hG14a,hG16}. 

The $m$-by-$m$ \textit{companion matrix} of $L(z)=z^m +\sum_{j=0}^{m-1}a_jz^j$ is the matrix
\[\mathcal{C}=\begin{bmatrix} 0& 1& 0& \cdots& 0\\
0& 0& 1& \cdots& 0\\
\vdots& \vdots& \vdots& \ddots& \vdots\\
0& 0& 0& \cdots& 1\\
-a_0& -a_1& -a_2& \cdots& -a_{m-1}
\end{bmatrix}.\]
In \cite{hG16}, estimates (and in some cases exact values) were given for $d(\mathcal{C})$. If $m\geq 2$, then \cite[Theorem 3.2]{hG16} guarantees that $d(\mathcal{C})\leq \lceil \frac{m}{2}\rceil$; if $m$ is odd (respectively, even), then $d(\mathcal{C})=\frac{m-1}{2}$ or $\frac{m+1}{2}$ (respectively, $d(\mathcal{C})=\frac{m}{2}$). In particular for odd $m$, $d(\mathcal{C})=\frac{m+1}{2}$ if and only if $a_0=a_2=a_4=\cdots =a_{m-1}=0$  \cite[Theorem 3.3]{hG16}. On the other hand, let $a\in \mathbb C$ be given, and consider the upper triangular matrix defined by \[\mathcal{K}_2(a):=\begin{bmatrix} 0&a\\0 &0\end{bmatrix}\ \textup{and}\ \mathcal{K}_m(a):=\begin{bmatrix}0& a& a^2&\cdots& a^{m-1}\\ 0& 0& a& \ddots& \vdots\\ 0& 0 &0 &\ddots& a^2\\ \vdots& \vdots& \vdots& \ddots& a\\ 0& 0 &0 & \cdots &0\end{bmatrix}\ \textup{when}\ m\geq 3.\] In \cite{hG13}, the authors refer to $\mathcal{K}_m(a)$ as the \textit{upper triangular Kac-Murdock-Szeg\"o (KMS) matrix corresponding to $a$}. In the literature, there is another Toeplitz matrix named after Kac, Murdock, and Szeg\"o \cite{KMS}, namely, \[\mathcal{K}_m(a)+\mathcal{K}_m(a)^\top+I_m=\begin{bmatrix}1& a& a^2&\cdots& a^{m-1}\\ a & 1& a& \ddots& \vdots\\ a^2& a &1 &\ddots& a^2\\ \vdots& \ddots& \ddots & \ddots& a\\ a^{m-1}&  \cdots& a^2&a &1\end{bmatrix} \] where $0<a<1$. If $a\in \mathbb C$ above, then recent papers  (see \cite{gF18,gF20,gF19,gF21}) studied properties of the complex symmetric Toeplitz matrix $\mathcal{K}_m(a)+\mathcal{K}_m(a)^\top+I_m$. This study considers $\mathcal{K}_m(a)$, the \textit{upper triangular} KMS matrices or simply KMS matrices for brevity. In \cite{hG10}, the author started the study of the numerical range of KMS matrices by showing that the numerical radius of $\mathcal{K}_m(a)$ is related to the roots of a degree $m$ polynomial in $\cos t$. In \cite{hG13}, the authors launched a systematic study of properties of KMS matrices. It is known that $W(\mathcal{K}_m(a))$ is a circular disk if and only if $m=2$ and $a\neq 0 $ \cite[Theorem 2.3]{hG13}. 

In this paper, we study the zero-dilation indices of block matrix analogues of companion matrices and KMS matrices. Let $m,n\in \mathbb N$ where $m\geq 2$. Let $A=\bigoplus_{j=1}^{m-1}A_j\in M_{m-1}(M_n)$ and $B=[B_j]_{j=0}^{m-1}\in M_{1,m}(M_n)$. A matrix of the form \begin{equation}\label{compT}\mathcal{C}_{A,B}:=\begin{bmatrix} 0& \bigoplus_{j=1}^{m-1}A_j \\ B_0& [B_j]_{j=1}^{m-1}\end{bmatrix} \end{equation} is called a \textit{generalized companion matrix}. This family of matrices includes block companion matrices; in particular, the $m$-by-$m$ \textit{block companion matrix} of $L(z)=I_nz^m +\sum_{j=0}^{m-1}A_jz^j\in M_n$ is given by
\[\begin{bmatrix} 0& I_n& 0& \cdots& 0\\
0& 0& I_n& \cdots& 0\\
\vdots& \vdots& \vdots& \ddots& \vdots\\
0& 0& 0& \cdots& I_n\\
-A_0& -A_1& -A_2& \cdots& -A_{m-1}
\end{bmatrix}.\] Now, for a given $A\in M_n$, the $mn$-by-$mn$ \textit{block upper triangular KMS matrix corresponding to $A$} is the $mn$-by-$mn$ matrix defined by \begin{equation}\label{def:kms}\mathcal{K}_2(A):=\begin{bmatrix} 0& A\\ 0& 0\end{bmatrix};\ \mathcal{K}_m(A):=\begin{bmatrix}0& A& A^2&\cdots& A^{m-1}\\ 0 & 0& A& \ddots& \vdots\\ 0& 0 &0 &\ddots& A^2\\ \vdots& \vdots &\vdots & \ddots& A\\ 0& 0 & 0& \cdots &0\end{bmatrix}\ \textup{when}\ m\geq 3.\end{equation}

Schur complement arguments and other matrix theory techniques are used to generalize the bounds in \cite{hG16} and to show that if $A=\bigoplus_{j=1}^{m-1}A_j$ is nonsingular in \eqref{compT}, then $d(\mathcal{C}_{A,B})$ satisfies the following: if $m\geq 3$ is odd (respectively, $m\geq 2$ is even), then $\frac{(m-1)n}{2}\leq d(\mathcal{C}_{A,B})\leq \frac{(m+1)n}{2}$ (respectively, $ d(\mathcal{C}_{A,B})= \frac{mn}{2}$). For the odd $m$ case, examples are given showing that it is possible to get as zero-dilation index each integer value between $\frac{(m-1)n}{2} $ and $\frac{(m+1)n}{2}$. On the other hand, $d(\mathcal{K}_m(A))$ is computed to be equal to the number of nonnegative eigenvalues of $\textup{Re}(\mathcal{K}_m(A))$. Alternative characterizations of $d(\mathcal{K}_m(A))$ are given provided $A$ satisfies certain conditions. Moreover, necessary and sufficient conditions are given in terms of the matrix parameters $A$ and $B$ in order to determine when $\mathcal{K}_m(A)$ and $\mathcal{K}_m(B)$ are similar or unitarily similar. We also prove a block matrix extension of \cite[Theorem 2.3]{hG13}, namely, if $A $ is nonsingular, then $W(\mathcal{K}_m(A))$ is a circular disk centered at $0$ if and only if $m=2$. 




\section{Preliminaries}

Given $m,n\in \mathbb N $ and a nonempty set $S$, $M_{m,n}(S)$ denotes the set of $m$-by-$n$ matrices over $S$. If $m=n$ or $n=1$, write $M_{m}(S):=M_{m,m}(S)$ and $S^m:=M_{m,1}(S)$, respectively. For convenience, let $M_{m,n}:=\mathbb M_{m,n}(\mathbb C)$  and $M_{m}:=M_{m}(\mathbb C)$. The $k$-by-$k$ zero and identity matrix are denoted by $0_k$ and $I_k$, respectively. The subscript is omitted whenever the size is clear from the context. The (upper triangular) $k$-by-$k$ Jordan block corresponding to $\lambda$ is denoted by $J_k(\lambda)$. The diagonal matrix with $\lambda_1,\ldots,\lambda_n$ in its main diagonal is denoted by $\textup{diag}(\lambda_1,\ldots,\lambda_n).$ For matrices $X=[x_{ij}]$ and $Y$, their direct sum is given by $X\oplus Y=\begin{bmatrix} X& 0\\ 0& Y\end{bmatrix}$ while their Kronecker product is given by $X\otimes Y=[x_{ij}Y]$. If $A=\begin{bmatrix} A_{11}& A_{12}\\ A_{21}& A_{22}\end{bmatrix}$ where $A_{11} $ is nonsingular, then the Schur complement of $A_{11}$ in $A$ is $A_{22}-A_{21}A_{11}^{-1}A_{12}$. For a matrix $X$, let $\sigma(X)$ denote its set of eigenvalues and let $\mbox{Re}(X):=(X+X^*)/2$ and $\mbox{Im}(X):=(X-X^*)/2i$ where $X^*$ denotes the conjugate transpose. Let $H$ be a Hermitian matrix, i.e., $H^*=H$. Counting multiplicities, denote by $i_+(H)$ the number of positive eigenvalues of $H$, $i_-(H)$ the number of negative eigenvalues of $H$, and $i_0(H)$ the number of zero eigenvalues of $H$. Define $i_{\geq 0}(H):=i_+(H)+i_0(H)$ and $i_{\leq 0}(H):=i_-(H)+i_0(H)$. A positive (negative) semidefinite matrix, denoted $H\geq 0$ ($H\leq 0$), is a Hermitian matrix $H$ whose eigenvalues are all nonnegative (nonpositive). The unit circle in $\mathbb C$ is denoted by $\mathbb{T}:=\{\omega\in \mathbb C:\ |\omega|=1\}$.

We have the following consequence of the results in \cite{hG14b,cL08}.

\begin{proposition}\label{approach}
Let $A\in M_m$. If there exist $r\in \{0,1,2,\ldots,m\}$ and $\omega \in \mathbb T$ such that $\textup{nullity}(\textup{Re}(\omega A))\leq r$, then $d(A)\leq \frac{m+r}{2}.$
\end{proposition}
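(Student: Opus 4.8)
The plan is to invoke directly the variational characterization $d(A)=\min\{i_{\geq 0}(\textup{Re}(e^{i\theta}A)):\ \theta\in\mathbb R\}$ recorded in the introduction, and to exploit the fact that the minimum ranges over all of $\mathbb T$, so that both $\omega$ and its antipode $-\omega$ may be fed into it simultaneously. To set up notation, I would write $H:=\textup{Re}(\omega A)$, a Hermitian matrix of order $m$ whose nullity $i_0(H)=\textup{nullity}(H)$ is at most $r$ by hypothesis.

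First I would bound $d(A)$ from above using $\omega$ itself: since $\omega=e^{i\theta}$ for some $\theta\in\mathbb R$ and $d(A)$ is a minimum over all such angles, we obtain $d(A)\leq i_{\geq 0}(H)$. The second and key step is to apply the very same bound at the antipodal point $-\omega=e^{i(\theta+\pi)}\in\mathbb T$. Because $\textup{Re}((-\omega)A)=-H$, and negating a Hermitian matrix interchanges its positive and negative inertia while preserving its nullity, we have $i_{\geq 0}(-H)=i_{\leq 0}(H)$, and hence $d(A)\leq i_{\leq 0}(H)$.

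The final step is to add the two inequalities. Using the inertia decomposition $i_+(H)+i_-(H)+i_0(H)=m$, one computes $i_{\geq 0}(H)+i_{\leq 0}(H)=(i_+(H)+i_0(H))+(i_-(H)+i_0(H))=m+i_0(H)$. Summing the two bounds therefore yields $2\,d(A)\leq m+i_0(H)=m+\textup{nullity}(H)\leq m+r$, which is precisely the claimed estimate $d(A)\leq\frac{m+r}{2}$.

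I do not expect a genuine obstacle here, as the statement is essentially a counting consequence of the known characterization; the one point deserving care is the idea of using both $\omega$ and $-\omega$. A single evaluation at $\omega$ gives only $d(A)\leq i_{\geq 0}(H)$, which by itself does not force $d(A)\leq\frac{m+r}{2}$ when $i_+(H)$ is large. The antipodal symmetry $\textup{Re}(-\omega A)=-\textup{Re}(\omega A)$ is exactly what converts this one-sided estimate into a symmetric pair and brings the nullity into the bound.
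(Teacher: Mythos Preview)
Your proof is correct and follows essentially the same approach as the paper: both arguments use the characterization $d(A)=\min_{\theta}i_{\geq 0}(\textup{Re}(e^{i\theta}A))$ together with the antipodal pair $\omega,-\omega$ and the inertia decomposition of $H=\textup{Re}(\omega A)$. The only cosmetic difference is that the paper splits into the two cases $i_{\pm}(H)\geq\frac{m-r}{2}$ and picks the appropriate $\zeta\in\{\omega,-\omega\}$, whereas you add the two bounds $d(A)\leq i_{\geq 0}(H)$ and $d(A)\leq i_{\leq 0}(H)$ directly; your averaging is slightly slicker but the content is the same.
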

\begin{proof}
Assume $i_0(\textup{Re}(\omega A))=\textup{nullity}(\textup{Re} (\omega A))\leq r$ for some $\omega \in \mathbb{T}$. Then \[i_{+}(\textup{Re}(\omega A))+i_{-}(\textup{Re}(\omega A))=m-i_0(\textup{Re}(\omega A))\geq m-r.\] This implies that one of the summands is at 
least $\frac{m-r}{2}.$ If $i_{-}(\textup{Re}(\omega A))\geq \frac{m-r}{2}$, then \[i_{\geq 0}(\textup{Re}(\omega A))=m-i_{-}(\textup{Re}(\omega A))\leq m-\left(\frac{m-r}{2}\right)=\frac{m+r}{2}.\]
If $i_{+}(\textup{Re}(\omega A))\geq \frac{m-r}{2}$, then $i_{-}(\textup{Re}(-\omega A))=i_{+}(\textup{Re}(\omega A))\geq \frac{m-r}{2}.$ Hence,  
\[i_{\geq 0}(\textup{Re}(-\omega A))=m-i_{-}(\textup{Re}(-\omega A))\leq m-\left(\frac{m-r}{2}\right)=\frac{m+r}{2}.\]
In either case, there exists $\zeta \in \mathbb T$ such that $i_{\geq 0}(\textup{Re}(\zeta A))\leq \frac{m+r}{2}.$ By \cite[Theorem 2.2]{hG14b} or \cite[Theorem 3.1]{cL08}, it follows that $d(A)\leq \frac{m+r}{2}.$
\end{proof}


Proposition \ref{approach} generalizes \cite[Corollary 2.5]{hG14b} since $r=1$ implies $d(A)\leq \lfloor \frac{m+1}{2}\rfloor=\lceil\frac{m}{2}\rceil.$
\section{Generalized companion matrices}

In this section, we study properties of generalized companion matrices.

\subsection{Upper bound on $\textup{nullity}(\mbox{Re}(\omega \mathcal{C}_{A,B}))$}

Let $\mathcal{C}\in M_m$ be the usual companion matrix. \cite[Theorem 3.1]{hG16} guarantees the following:
\begin{enumerate}
    \item[(i)] If $m\geq 3$ is odd, then $\textup{nullity}(\textup{Re}(\omega \mathcal{C}))\leq 1$ for all $\omega \in \mathbb T$.
    \item[(ii)] If $m\geq 2$ is even, then 
    \begin{enumerate}
        \item[(a)] $\textup{nullity}(\textup{Re}(\omega \mathcal{C}))\leq 2$ for all $\omega \in \mathbb T$;
    \item[(b)] $\textup{nullity}(\textup{Re}(\omega \mathcal{C}))\leq 1$ for all but at most finitely $m$ values of $\omega\in \mathbb T$. 
\end{enumerate}
\end{enumerate} Cauchy interlacing was used to prove (i) and (ii.a), but we prove their generalizations without interlacing. 

\begin{proposition}\label{prop:1}
Let $m,n\in \mathbb N$ where $m\geq 2$. Let $A=\bigoplus_{j=1}^{m-1}A_j\in M_{m-1}(M_n)$ and $B=[B_j]_{j=0}^{m-1}\in M_{1,m}(M_n)$. For each $\omega\in \mathbb{T}$, define $Y:=[\omega^{m-j} B_j]$. Then $\omega \mathcal{C}_{A,B}$ and $\mathcal{C}_{A,Y}$ are unitarily similar. In particular, $\textup{Re}(\omega \mathcal{C}_{A,B})$ and $\textup{Re}(\mathcal{C}_{A,Y})$ are unitarily similar.
\end{proposition}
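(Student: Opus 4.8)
The plan is to exhibit an explicit block-diagonal unitary that conjugates $\omega\mathcal{C}_{A,B}$ into $\mathcal{C}_{A,Y}$. First I would record the block structure. Partitioning into $m$ block rows and columns of size $n$, the matrix $\mathcal{C}_{A,B}$ has $A_i$ in the $(i,i+1)$ block for $i=1,\ldots,m-1$, the blocks $B_0,B_1,\ldots,B_{m-1}$ across the $m$-th (last) block row (so the $(m,j)$ block is $B_{j-1}$), and zeros elsewhere. Scaling by $\omega$ simply multiplies every block by $\omega$, so the superdiagonal blocks become $\omega A_i$ and the last row becomes $\omega B_{j-1}$.

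Next I would take the unitary $U:=\bigoplus_{k=0}^{m-1}\bar\omega^{\,k}I_n=\textup{diag}(I_n,\bar\omega I_n,\ldots,\bar\omega^{\,m-1}I_n)$, which is unitary precisely because $|\omega|=1$. Writing $u_i:=\bar\omega^{\,i-1}$, conjugation by $U$ rescales the $(i,j)$ block by $\bar u_i u_j=\omega^{\,i-1}\bar\omega^{\,j-1}$, so the $(i,j)$ block of $U^*(\omega\mathcal{C}_{A,B})U$ equals $\omega^{\,i-1}\bigl(\omega\,(\cdot)_{ij}\bigr)\bar\omega^{\,j-1}$. The core of the argument is then a short exponent check: for a superdiagonal block $(i,i+1)$ the accumulated factor is $\omega^{\,i-1}\cdot\omega\cdot\bar\omega^{\,i}=1$, so each $A_i$ is preserved; for a last-row block $(m,j)$ the factor is $\omega^{\,m-1}\cdot\omega\cdot\bar\omega^{\,j-1}=\omega^{\,m-j+1}$, which turns $B_{j-1}$ into $\omega^{\,m-(j-1)}B_{j-1}=Y_{j-1}$. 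Hence $U^*(\omega\mathcal{C}_{A,B})U=\mathcal{C}_{A,Y}$, giving the claimed unitary similarity.

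For the final sentence, I would simply note that unitary similarity is inherited by the Hermitian part: since $\omega\mathcal{C}_{A,B}=U\,\mathcal{C}_{A,Y}\,U^*$ and $(U X U^*)^*=U X^* U^*$, we get $\textup{Re}(\omega\mathcal{C}_{A,B})=U\,\textup{Re}(\mathcal{C}_{A,Y})\,U^*$. The whole proof is a computation with no substantive obstacle; the only point requiring care is correctly bookkeeping the powers of $\omega$ (choosing $\bar\omega^{\,i-1}$ rather than $\omega^{\,i-1}$ on the diagonal), which is exactly what makes the superdiagonal factors collapse to $1$ while producing the prescribed weights $\omega^{\,m-j}$ in the last row.
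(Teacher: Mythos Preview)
Your proof is correct and takes essentially the same approach as the paper: both exhibit a block-diagonal unitary whose diagonal blocks are powers of $\omega$ and verify that conjugation carries $\omega\mathcal{C}_{A,B}$ to $\mathcal{C}_{A,Y}$. The paper's choice is $U=\bigoplus_{j=1}^m\omega^{j}I_n$ (a scalar multiple of the adjoint of your $U$) and records the one-line identity $\mathcal{C}_{A,Y}U=\omega U\mathcal{C}_{A,B}$ without the explicit exponent bookkeeping you supply.
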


\begin{proof}
Let $\omega\in \mathbb T$ and consider $U=\bigoplus_{j=1}^m\omega^jI_n$. Then $\mathcal{C}_{A,Y}U=\omega U\mathcal{C}_{A,B}$. The claims follow since $U$ is unitary.
\end{proof}

To get a good estimate of $d(\mathcal{C}_{A,B})$ by way of Proposition \ref{approach}, it is crucial to have a small upper bound on $\textup{nullity}(\mbox{Re}(\omega \mathcal{C}_{A,B}))$. 

\begin{proposition}\label{prop:2}
Let $m,n\in \mathbb N$ where $m\geq 3$. Let $A=\bigoplus_{j=1}^{m-1}A_j\in M_{m-1}(M_n)$ and $B=[B_j]_{j=0}^{m-1}\in M_{1,m}(M_n)$. 
Suppose $A_1,\ldots,A_{m-2}$ are nonsingular. If $m$ is odd, then $\textup{nullity}(\textup{Re}(\omega \mathcal{C}_{A,B}))\leq n$ for any $\omega \in \mathbb{T}$; if $m$ is even, $\textup{nullity}(\textup{Re}(\omega \mathcal{C}_{A,B}))\leq 2n$ for any $\omega\in \mathbb{T}.$

\end{proposition}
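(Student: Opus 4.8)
The plan is to reduce to the case $\omega=1$ and then exploit the block structure of $\textup{Re}(\mathcal{C}_{A,B})$. By Proposition \ref{prop:1}, $\textup{Re}(\omega\mathcal{C}_{A,B})$ is unitarily similar to $\textup{Re}(\mathcal{C}_{A,Y})$ with $Y=[\omega^{m-j}B_j]$, and unitary similarity preserves nullity. Since this reduction changes only the block row $B$ and leaves $A$ (hence the invertibility of $A_1,\ldots,A_{m-2}$) untouched, and since the bound sought does not depend on $B$, it suffices to bound $\textup{nullity}(\textup{Re}(\mathcal{C}_{A,B}))$ for an arbitrary block row $B$ at $\omega=1$.

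First I would write out $\mathcal{C}_{A,B}+\mathcal{C}_{A,B}^*$ as an $m\times m$ array of $n\times n$ blocks. A direct computation shows that the leading $(m-1)\times(m-1)$ principal block subarray is block tridiagonal with zero diagonal blocks, super-diagonal blocks $A_1,\ldots,A_{m-2}$ and sub-diagonal blocks $A_1^*,\ldots,A_{m-2}^*$; call this submatrix $T$. The decisive structural point is that $A_{m-1}$ and every $B_j$ enter only the last block row and the last block column, so that $2\,\textup{Re}(\mathcal{C}_{A,B})=\begin{bmatrix} T & b\\ b^* & B_{m-1}+B_{m-1}^*\end{bmatrix}$ for a suitable border $b$. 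In particular the hypothesis that $A_1,\ldots,A_{m-2}$ are nonsingular is exactly the statement that every off-diagonal block of $T$ is invertible, which is why $A_{m-1}$ need not be assumed invertible.

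The heart of the argument is to compute $\textup{nullity}(T)$. Solving $Tw=0$ for $w=(w_1,\ldots,w_{m-1})$, the first block equation forces $w_2=0$; the odd block rows then force all even-indexed blocks $w_{2k}=0$ by invertibility of the $A_i$, while the even block rows express each odd-indexed block recursively as an invertible matrix applied to $w_1$. The last block equation is decisive: when $m$ is odd (so $m-1$ is even) it reads $A_{m-2}^*w_{m-2}=0$ with $w_{m-2}$ an invertible image of $w_1$, forcing $w_1=0$ and hence $\textup{nullity}(T)=0$; when $m$ is even (so $m-1$ is odd) the last equation involves only an already-vanishing even-indexed block and imposes no constraint, so $w_1$ is free and $\textup{nullity}(T)=n$. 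I expect the parity bookkeeping together with the exact form of the final boundary equation to be the main obstacle, since a single off-by-one in the index of the last block row flips the answer.

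Finally I would pass from $T$ to the full matrix by rank monotonicity. Since $T$ is a principal submatrix of $2\,\textup{Re}(\mathcal{C}_{A,B})$, we have $\textup{rank}(\textup{Re}(\mathcal{C}_{A,B}))\ge \textup{rank}(T)$, whence
\[\textup{nullity}(\textup{Re}(\mathcal{C}_{A,B}))=mn-\textup{rank}(\textup{Re}(\mathcal{C}_{A,B}))\le mn-\textup{rank}(T)=n+\textup{nullity}(T).\]
Substituting $\textup{nullity}(T)=0$ for odd $m$ and $\textup{nullity}(T)=n$ for even $m$ yields the bounds $n$ and $2n$, respectively, and the reduction in the first paragraph promotes these from $\omega=1$ to every $\omega\in\mathbb{T}$.
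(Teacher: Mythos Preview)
Your proof is correct and follows essentially the same approach as the paper: reduce to $\omega=1$ via Proposition~\ref{prop:1}, isolate the leading $(m-1)\times(m-1)$ block-tridiagonal principal submatrix (the paper calls it $H$, you call it $T$), and bound the nullity of $\textup{Re}(\mathcal{C}_{A,B})$ by $mn-\textup{rank}(T)$. The only difference is in how the rank of $T$ is computed: the paper performs explicit block row/column elimination with auxiliary matrices $L_j,U_j$ to reduce $H$ to $\textup{Re}(J_{m-1}(0))\otimes I_n$ (odd $m$) or $(\textup{Re}(J_{m-2}(0))\otimes I_n)\oplus 0_n$ (even $m$), whereas you solve $Tw=0$ directly and track the parity of the recursion---this is a minor technical variation, not a different route.
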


\begin{proof}
Let $\omega\in \mathbb T$. By Proposition \ref{prop:1}, $\textup{Re}(\omega \mathcal{C}_{A,B})$ and $\textup{Re}(\mathcal{C}_{A,Y})$ are unitarily similar for some $Y\in M_{1,m}(M_n)$. If \[H:= \frac{1}{2}\begin{bmatrix}0& A_1& 0&\cdots& 0\\ A_1^*& 0& A_2& \ddots& \vdots\\ 0& \ddots &\ddots &\ddots& 0\\ \vdots& \ddots& A_{m-3}^*& 0& A_{m-2}\\ 0& \cdots & 0& A_{m-2}^*&0\end{bmatrix}\in M_{m-1}(M_n),\] then $H$ appears as a principal submatrix of $\textup{Re}(\mathcal{C}_{A,Y})$. Hence, \[\textup{nullity} (\textup{Re}(\omega \mathcal{C}_{A,B}))=mn-\textup{rank}(\textup{Re}(\omega \mathcal{C}_{A,B}))\leq mn-\textup{rank}(H).\] 
If $m=3$, note that $H=\begin{bmatrix} 0 & A_1\\ A_1^* & 0\end{bmatrix}$ is nonsingular since $A_1$ is nonsingular, and so \[\textup{nullity} (\textup{Re}(\omega \mathcal{C}_{A,B}))\leq 3n-2n=n.\]
Assume $m\geq 4$. For each $j\in\{1,\ldots,m-3\}$, consider
\[G_j:=\begin{bmatrix} 0& A_j& 0\\ A_j^* &0& A_{j+1}\\ 0& A_{j+1}^*& 0\end{bmatrix},\ L_j:=\begin{bmatrix} A_j^{-1}& 0& 0\\ 0 & (A_j^*)^{-1}& 0\\ -A_{j+1}^*A_j^{-1}& 0 & I_n\end{bmatrix},\] and\[U_j:=\begin{bmatrix} I_n& 0& -(A_j^*)^{-1}A_{j+1}\\ 0 & I_n& 0\\ 0& 0 & I_n\end{bmatrix}.\] All inverses used above exist by assumption on $A_1,\ldots,A_{m-2}$. Observe that \[L_jG_jU_j=\begin{bmatrix}0& I_n& 0\\ I_n& 0& 0\\0 & 0& 0\end{bmatrix}.\] Define
\[\widehat{L_j}:=I_{(j-1)n}\oplus L_j\oplus I_{(m-3-j)n}\ \textup{and}\ \widehat{U_j}:=I_{(j-1)n}\oplus U_j \oplus I_{(m-3-j)n}\]
where the identity summand is omitted if its subscript is $0$. If $m\geq 4$ is odd, then 
\[Q\widehat{L}_{m-4}\widehat{L}_{m-6}\cdots \widehat{L}_{3}\widehat{L}_{1}H\widehat{U}_{1}\widehat{U}_{3}\cdots \widehat{U}_{m-6}\widehat{U}_{m-4}=\textup{Re}(J_{m-1}(0))\otimes I_n\] where $Q:=I_{(m-3)n}\oplus A_{m-2}^{-1}\oplus (A_{m-2}^*)^{-1}.$ Since $m$ is odd, \[\textup{nullity} (\textup{Re}(\omega \mathcal{C}_{A,B}))\leq mn-\textup{rank}(H)= mn-(m-1)n=n.\] If $m\geq 4$ is even, then
\[\widehat{L}_{m-3}\widehat{L}_{m-5}\cdots \widehat{L}_{3}\widehat{L}_{1}H\widehat{U}_{1}\widehat{U}_{3}\cdots \widehat{U}_{m-5}\widehat{U}_{m-3}=(\textup{Re}(J_{m-2}(0))\otimes I_n)\oplus 0_n.\] Since $m$ is even, \[\textup{nullity} (\textup{Re}(\omega \mathcal{C}_{A,B}))\leq mn-\textup{rank}(H) =mn-(m-2)n=2n.\] \end{proof}

The upper bound in the even case of Proposition \ref{prop:2} will be improved in the following results.

\begin{lemma}\label{dettri}
Let $m,n\in \mathbb N$ where $m\geq 2$ is even. Let $A=\bigoplus_{j=1}^{m-1}A_j\in M_{m-1}(M_n)$ be nonsingular and $Y=[Y_j]_{j=0}^{m-1}\in M_{1,m}(M_n)$. Let $P(z):=I_nz^m+\sum_{j=1}^{\frac{m}{2}}P_jz^{2j-2}$ where 
\[P_j:=
(-1)^{\frac{m}{2}-j}A_{m-1}^{-1}A_{m-2}^*A_{m-3}^{-1}A_{m-4}^*A_{m-5}^{-1}\cdots A_{2j}^*A_{2j-1}^{-1}Y_{2j-2}^*\] for all $ j\in \left\{1,2,3,\ldots,\frac{m}{2}\right\}$. Then \[\det(\textup{Re}(\mathcal{C}_{A,Y}))=\frac{(-1)^{\frac{mn}{2}}}{2^{mn}}\prod_{\substack{j=1\\ j\ \textup{odd}}}^{m-1}|\det(A_j)|^2|\det(P(1))|^2.\]

\end{lemma}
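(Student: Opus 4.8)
The plan is to compute the determinant directly, writing $M:=2\,\textup{Re}(\mathcal{C}_{A,Y})$ so that $\det(\textup{Re}(\mathcal{C}_{A,Y}))=2^{-mn}\det(M)$. First I would record $M$ explicitly in $m\times m$ block form. Its interior (block rows and columns $1,\dots,m-1$) is the block tridiagonal matrix with zero diagonal, superdiagonal blocks $A_1,\dots,A_{m-2}$ and subdiagonal blocks $A_1^*,\dots,A_{m-2}^*$; the last block row and column form a border, with $(j,m)$ block equal to $Y_{j-1}^*$ for $1\le j\le m-2$, the $(m-1,m)$ block equal to $A_{m-1}+Y_{m-2}^*$, and the $(m,m)$ block equal to $Y_{m-1}+Y_{m-1}^*$.

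The decisive step is a symmetric block permutation $P$ that lists the odd block indices $1,3,\dots,m-1$ first and the even indices $2,4,\dots,m$ afterwards (each group has $m/2$ indices since $m$ is even). I claim no block of $M$ in an odd--odd position is nonzero: the diagonal blocks all vanish except at the even--even position $(m,m)$; the tridiagonal couplings join consecutive, hence opposite-parity, indices; and every border coupling involves the index $m$, which is even. Hence the odd--odd block of $PMP^\top$ is zero, so $PMP^\top=\begin{bmatrix}0 & M_{oe}\\ M_{oe}^* & M_{ee}\end{bmatrix}$ with each block of size $N:=mn/2$. As $P$ is a symmetric permutation, $\det\begin{bmatrix}0&B\\C&D\end{bmatrix}=(-1)^{N}\det(B)\det(C)$ for $N$-by-$N$ blocks $B,C$, and $M_{oe}^*$ contributes $\overline{\det(M_{oe})}$, I obtain $\det(M)=(-1)^{mn/2}\lvert\det(M_{oe})\rvert^2$. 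I expect this vanishing of the odd--odd block to be the heart of the argument; everything afterwards is bookkeeping.

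It then remains to evaluate $\det(M_{oe})$, whose rows are indexed by the odd blocks and columns by the even blocks. Reading off the surviving entries shows $M_{oe}$ is block lower bidiagonal with diagonal blocks $A_1,A_3,\dots,A_{m-3}$ followed by $A_{m-1}+Y_{m-2}^*$, subdiagonal blocks $A_2^*,A_4^*,\dots,A_{m-2}^*$, and a full last block column with entries $Y_0^*,Y_2^*,\dots,Y_{m-4}^*$ above the final diagonal block. Since every $A_j$ is invertible, I would clear the subdiagonal by successive determinant-preserving row operations, pivoting on $A_1,A_3,\dots,A_{m-3}$; this leaves a block upper triangular matrix and folds the last column into blocks $\widetilde{Y}_a$ given by $\widetilde{Y}_1=Y_0^*$ and $\widetilde{Y}_a=Y_{2a-2}^*-A_{2a-2}^*A_{2a-3}^{-1}\widetilde{Y}_{a-1}$. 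Thus $\det(M_{oe})=\det(A_1)\det(A_3)\cdots\det(A_{m-3})\det\!\big(A_{m-1}+\widetilde{Y}_{m/2}\big)$.

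Finally I would factor the last pivot as $A_{m-1}+\widetilde{Y}_{m/2}=A_{m-1}\big(I_n+A_{m-1}^{-1}\widetilde{Y}_{m/2}\big)$ and unroll the recursion. The coefficient of $Y_{2a-2}^*$ in $A_{m-1}^{-1}\widetilde{Y}_{m/2}$ works out to $(-1)^{\frac{m}{2}-a}A_{m-1}^{-1}A_{m-2}^*A_{m-3}^{-1}\cdots A_{2a}^*A_{2a-1}^{-1}$, which is exactly $P_a$ stripped of the trailing $Y_{2a-2}^*$; hence $A_{m-1}^{-1}\widetilde{Y}_{m/2}=\sum_{j=1}^{m/2}P_j$ and $I_n+A_{m-1}^{-1}\widetilde{Y}_{m/2}=P(1)$. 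Collecting the determinants of the odd-indexed $A_j$ then gives $\det(M_{oe})=\prod_{j\ \textup{odd}}\det(A_j)\,\det(P(1))$, and substituting into $\det(\textup{Re}(\mathcal{C}_{A,Y}))=2^{-mn}(-1)^{mn/2}\lvert\det(M_{oe})\rvert^2$ produces the claimed identity. Aside from the parity observation above, the only delicate point is matching the alternating $A^{-1},A^*$ pattern and the sign $(-1)^{\frac{m}{2}-a}$ of the unrolled recursion against the definition of $P_j$.
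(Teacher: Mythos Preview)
Your argument is correct and takes a genuinely different route from the paper's. The paper proves the lemma by induction on even $m$: it peels off the top-left $2n\times 2n$ principal block $2\,\textup{Re}(J_2(0)\otimes A_1)$, computes its Schur complement in $2\,\textup{Re}(\mathcal{C}_{A,Y})$, recognises that complement as $2\,\textup{Re}(\mathcal{C}_{\widetilde A,W})$ for a shifted $(m-2)$-block companion, applies the inductive hypothesis, and then checks the identity $\widetilde P(1)=P(1)$ by matching coefficients. Your approach instead exploits a single global symmetry: the odd--even block permutation sends $M$ to $\begin{bmatrix}0&M_{oe}\\M_{oe}^*&M_{ee}\end{bmatrix}$, which immediately yields both the sign $(-1)^{mn/2}$ and the squared-modulus structure $|\det(M_{oe})|^2$ without any recursion; the remaining work is a straightforward block Gaussian elimination on the bidiagonal-plus-border matrix $M_{oe}$.

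The parity observation is the real insight in your version and explains in one stroke why the formula has the shape it does, whereas the paper's inductive argument obtains the same shape only after the base case and the telescoping of $\widetilde P_j$'s. The paper's Schur-complement route, on the other hand, stays closer to the machinery used elsewhere in Section~3 (e.g.\ Haynsworth's theorem in the examples and in Theorem~\ref{thm:zdikms}), so it fits the paper's toolkit more uniformly. Both proofs ultimately reduce to the same alternating product $A_{m-1}^{-1}A_{m-2}^*A_{m-3}^{-1}\cdots$, and your unrolling of the recursion $\widetilde Y_a$ matches the definition of $P_j$ exactly as you claim.
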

\begin{proof}
We use induction on $m$. Consider $m=2$. Then \[\textup{Re}( \mathcal{C}_{A,Y})=\frac{1}{2}\begin{bmatrix} 0 & A_1+Y_0^*\\ Y_0+A_1^*& Y_1+Y_1^*\end{bmatrix}=\frac{1}{2}\begin{bmatrix} A_1+Y_0^*& 0\\  Y_1+Y_1^*& Y_0+A_1^*\end{bmatrix}\begin{bmatrix}0 & I_n\\ I_n& 0\end{bmatrix}.\] Hence, \[\begin{array}{rcl}\det(\textup{Re}( \mathcal{C}_{A,Y}))&=& \frac{(-1)^n}{2^{2n}}|\det(A_1+Y_0^*)|^2\\
&=&\frac{(-1)^n}{2^{2n}}|\det(A_1)|^2|\det(I_n+ A_1^{-1}Y_0^*)|^2\\
&=&\frac{(-1)^n}{2^{2n}}|\det(A_1)|^2|\det(P(1))|^2\\
\end{array}\]
proving the $m=2$ case. Assume $m>2$ is even and the determinant formula works for $(m-2)$-by-$(m-2)$ blocks. If we set $\widetilde{A}:=\bigoplus_{j=1}^{m-3}\widetilde{A}_j=\bigoplus_{j=3}^{m-1}A_j$, $\widetilde{Y}:=[Y_j]_{j=2}^{m-1}$, and $V:=\begin{bmatrix} 0& 0& \cdots& Y_0^*\\ A_2& 0& \cdots&  Y_1^*\end{bmatrix}$, then 
the Schur complement of $2\textup{Re}(J_2(0)\otimes A_1)$ in $2\textup{Re}(\mathcal{C}_{A,Y})$ can be computed as 
$2\textup{Re}(\mathcal{C}_{\widetilde{A},\widetilde{Y}})-V^*(2\textup{Re}(J_2(0)\otimes A_1))^{-1}V=2\textup{Re}(\mathcal{C}_{\widetilde{A},W})$ where \[W:=[W_j]_{j=0}^{m-3}=\begin{bmatrix}Y_2-Y_0(A_1^{-1})^*A_2& Y_3& Y_4& \cdots&  Y_{m-2}& Y_{m-1}-Y_0(A_1^{-1})^*Y_1^*\end{bmatrix}.\] By the induction hypothesis, \[\det(2\textup{Re}(\mathcal{C}_{\widetilde{A},W}))=(-1)^{\frac{(m-2)n}{2}}|\det(A_3A_5\cdots A_{m-3}A_{m-1})|^2|\det(\widetilde{P}(1))|^2\] where $\widetilde{P}(z)=I_nz^{m-2}+\sum_{j=1}^{\frac{m-2}{2}}\widetilde{P}_jz^{2j-2}$ and \[\widetilde{P}_j=(-1)^{\frac{m-2}{2}-j}\widetilde{A}_{m-3}^{-1}\widetilde{A}_{m-4}^*\cdots \widetilde{A}_{2j}^*\widetilde{A}_{2j-1}^{-1}W_{2j-2}^*\] for all $j\in \{1,2,3,\ldots, \frac{m-2}{2}\}$. Note that $\widetilde{P}_1=P_1+P_2$ and $\widetilde{P}_j=P_{j+1}$ for all $j\in \{2,3,\ldots,\frac{m-2}{2}\}$. Hence, \[\widetilde{P}(1)=I_n+\sum_{j=1}^{\frac{m-2}{2}}\widetilde{P}_j=I_n+\widetilde{P}_1+\sum_{j=2}^{\frac{m-2}{2}}\widetilde{P}_j=I_n+P_1+P_2+\sum_{k=3}^{\frac{m}{2}}P_k=P(1).\] By the Schur determinant formula \cite[Section 0.8.5]{HJ1} and the induction hypothesis, we obtain 
\[\begin{array}{rcl}\det(\textup{Re}(\mathcal{C}_{A,Y}))&=&\frac{1}{2^{mn}}\det(2\textup{Re}(J_2(0)\otimes A_1))\det(2\textup{Re}(\mathcal{C}_{\widetilde{A},W}))\\
&=&\frac{(-1)^n}{2^{mn}}|\det(A_1)|^2\det(2\textup{Re}(\mathcal{C}_{\widetilde{A},W}))\\
&=&\frac{(-1)^{\frac{mn}{2}}}{2^{mn}}|\det(A_1A_3A_5\cdots A_{m-3}A_{m-1})|^2|\det(\widetilde{P}(1))|^2 \\
&=&\frac{(-1)^{\frac{mn}{2}}}{2^{mn}}|\det(A_1A_3A_5\cdots A_{m-3}A_{m-1})|^2|\det(P(1))|^2.
\end{array}\]
\end{proof}

Let $L(z)=I_nz^m +\sum_{j=0}^{m-1}A_jz^j\in M_n$. The \textit{spectrum} of $L(z)$ is defined as $\sigma(L)=\{z\in \mathbb C:\det(L(z))=0\}$. Consider the $m$-by-$m$ block companion matrix of $L(z)$. By \cite[Chapter 1]{matpoly}, the spectrum of $L(z)$ is the set of eigenvalues of its companion matrix; in particular, $\sigma(L)$ has at most $mn$ elements. 

\begin{proposition}\label{prop:4}
Let $m,n\in \mathbb N$ where $m\geq 2$ is even. Let $A=\bigoplus_{j=1}^{m-1}A_j\in M_{m-1}(M_n)$ be nonsingular and $B=[B_j]_{j=0}^{m-1}\in M_{1,m}(M_n)$. Then \\
$\textup{nullity}(\textup{Re}(\omega \mathcal{C}_{A,B}))\leq n\ \textup{for all}\ \omega\in \mathbb T\setminus \sigma(P)$ where $P(z):=I_nz^m+\sum_{j=1}^{\frac{m}{2}}P_jz^{2j-2}$ and \[P_j:=
(-1)^{\frac{m}{2}-j}A_{m-1}^{-1}A_{m-2}^*A_{m-3}^{-1}A_{m-4}^*A_{m-5}^{-1}\cdots A_{2j}^*A_{2j-1}^{-1}B_{2j-2}^*\] for all $ j\in \left\{1,2,3,\ldots,\frac{m}{2}\right\}$. 
Moreover, \[\det(\textup{Re}(\omega \mathcal{C}_{A,B}))=\frac{(-1)^{\frac{mn}{2}}}{2^{mn}}\prod_{\substack{j=1\\ j\ \textup{odd}}}^{m-1}|\det(A_j)|^2|\det(P(\omega))|^2\] for all $\omega\in\mathbb T$.
\end{proposition}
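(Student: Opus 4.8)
The plan is to prove the determinant formula first and then read off the nullity bound as an immediate consequence. Since $\omega\in\mathbb T$, Proposition~\ref{prop:1} shows that $\textup{Re}(\omega\mathcal C_{A,B})$ is unitarily similar to $\textup{Re}(\mathcal C_{A,Y})$ with $Y=[Y_j]_{j=0}^{m-1}$, $Y_j=\omega^{m-j}B_j$; in particular the two matrices have the same determinant and the same nullity. Because $A$ is nonsingular, Lemma~\ref{dettri} applies to $\mathcal C_{A,Y}$ and gives
\[
\det(\textup{Re}(\mathcal C_{A,Y}))=\frac{(-1)^{\frac{mn}{2}}}{2^{mn}}\prod_{\substack{j=1\\ j\ \textup{odd}}}^{m-1}|\det(A_j)|^2\,|\det(P_Y(1))|^2,
\]
where $P_Y(z)=I_nz^m+\sum_{j=1}^{m/2}P_j^{Y}z^{2j-2}$ is the matrix polynomial of Lemma~\ref{dettri} built from $Y$, namely $P_j^{Y}=(-1)^{\frac m2-j}A_{m-1}^{-1}A_{m-2}^*\cdots A_{2j}^*A_{2j-1}^{-1}Y_{2j-2}^*$.

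The crux is to relate $P_Y(1)$ to the polynomial $P$ of the statement, which uses $B_{2j-2}^*$ in place of $Y_{2j-2}^*$. First I would track the powers of $\omega$: from $Y_{2j-2}=\omega^{m-2j+2}B_{2j-2}$ and $|\omega|=1$ one gets $Y_{2j-2}^*=\omega^{-(m-2j+2)}B_{2j-2}^*$, whence $P_j^{Y}=\omega^{-(m-2j+2)}P_j=\omega^{-m}\omega^{2j-2}P_j$. Summing and pulling out the common factor $\omega^{-m}$ yields
\[
P_Y(1)=I_n+\omega^{-m}\sum_{j=1}^{m/2}\omega^{2j-2}P_j=\omega^{-m}\Big(\omega^mI_n+\sum_{j=1}^{m/2}\omega^{2j-2}P_j\Big)=\omega^{-m}P(\omega).
\]
Since $|\omega|=1$, $|\det(P_Y(1))|=|\omega^{-mn}|\,|\det(P(\omega))|=|\det(P(\omega))|$, so substituting into the displayed formula and using $\det(\textup{Re}(\omega\mathcal C_{A,B}))=\det(\textup{Re}(\mathcal C_{A,Y}))$ establishes the determinant formula for every $\omega\in\mathbb T$.

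The nullity bound is then immediate: if $\omega\in\mathbb T\setminus\sigma(P)$ then $\det(P(\omega))\neq 0$ by definition of $\sigma(P)$, while $A$ nonsingular forces $\det(A_j)\neq 0$ for each odd $j$; hence the determinant formula gives $\det(\textup{Re}(\omega\mathcal C_{A,B}))\neq 0$. Therefore $\textup{Re}(\omega\mathcal C_{A,B})$ is nonsingular, so $\textup{nullity}(\textup{Re}(\omega\mathcal C_{A,B}))=0\leq n$, which is the claimed estimate.

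I do not anticipate a genuine obstacle, since the proposition is essentially a corollary of Lemma~\ref{dettri} combined with the unitary reduction of Proposition~\ref{prop:1}. The only delicate point is the bookkeeping of the exponents of $\omega$ that yields $P_Y(1)=\omega^{-m}P(\omega)$ — in particular getting the signs and the offset $m-2j+2$ correct and confirming they collapse to the uniform factor $\omega^{-m}$. For context it is also worth recording that the leading term $I_nz^m$ makes $\det(P(z))$ a monic polynomial of degree $mn$, so $\det(P(z))\not\equiv 0$ and $\sigma(P)$ is finite; thus the bound $\textup{nullity}(\textup{Re}(\omega\mathcal C_{A,B}))\leq n$ holds for all but at most $mn$ values of $\omega\in\mathbb T$.
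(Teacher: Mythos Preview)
Your proof is correct. The determinant-formula half matches the paper's argument essentially verbatim: reduce via Proposition~\ref{prop:1}, apply Lemma~\ref{dettri}, and track the powers of $\omega$ to obtain $P_Y(1)=\omega^{-m}P(\omega)$.

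For the nullity bound you take a genuinely shorter route than the paper. You observe that for $\omega\in\mathbb T\setminus\sigma(P)$ the determinant formula already forces $\det(\textup{Re}(\omega\mathcal C_{A,B}))\neq 0$, hence nullity $=0\leq n$. The paper instead gives an independent kernel-chasing argument: it takes $x=[x_j]$ with $\textup{Re}(\mathcal C_{A,Y})x=0$, iterates the block-row relations $A_k^*x_k+A_{k+1}x_{k+2}+Y_k^*x_m=0$ to deduce $P(\omega)x_m=0$, concludes $x_m=0$ when $\omega\notin\sigma(P)$, and then notes that the truncated vector $[x_j]_{j=1}^{m-1}$ lies in the kernel of the tridiagonal principal submatrix $H$, whose nullity is $n$ for even $m$. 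Your argument is cleaner and in fact yields the sharper conclusion nullity $=0$ rather than merely $\leq n$; the paper's approach has the virtue of making the link between kernel vectors and the matrix polynomial $P$ explicit (paralleling the odd-$m$ analysis in Proposition~\ref{prop:2}), but for the stated proposition your use of the determinant formula is entirely sufficient.
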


\begin{proof} 
Let $\omega\in \mathbb T$. By Proposition \ref{prop:1}, $\textup{Re}(\omega \mathcal{C}_{A,B})$ and $\textup{Re}(\mathcal{C}_{A,Y})$ are unitarily similar where $Y=[\omega^{m-j}B_j]_{j=0}^{m-1}$. 

We first prove the determinant formula. By similarity and Lemma \ref{dettri}, 
\[\det(\textup{Re}(\omega \mathcal{C}_{A,B}))=\det(\textup{Re}( \mathcal{C}_{A,Y}))=\frac{(-1)^{\frac{mn}{2}}}{2^{mn}}\prod_{\substack{j=1\\ j\ \textup{odd}}}^{m-1}|\det(A_j)|^2|\det(\widetilde{P}(1))|^2\]
where 
$\widetilde{P} (z):=I_nz^m+\sum_{j=1}^{\frac{m}{2}}\widetilde{P}_jz^{2j-2}$ and \begin{equation}\label{PTILDE}\widetilde{P}_j:=
(-1)^{\frac{m}{2}-j}A_{m-1}^{-1}A_{m-2}^*A_{m-3}^{-1}A_{m-4}^*A_{m-5}^{-1}\cdots A_{2j}^*A_{2j-1}^{-1}Y_{2j-2}^*\end{equation} for all $ j\in \left\{1,2,3,\ldots,\frac{m}{2}\right\}$. Note that $\widetilde{P}_j=\omega^{-m+2j-2}P_j$ for all $ j\in \left\{1,2,3,\ldots,\frac{m}{2}\right\}$, and so \begin{equation}\label{pomega}\widetilde{P}(1)=I_n+\sum_{j=1}^{\frac{m}{2}}\widetilde{P}_j=I_n+\sum_{j=1}^{\frac{m}{2}}P_j\omega^{-m+2j-2}=\omega^{-m}P(\omega).\end{equation} Since $\omega\in \mathbb T,$ \[\det(\textup{Re}({\cal C}_{A,Y}))=\frac{(-1)^{\frac{mn}{2}}}{2^{mn}}|\det(A_1A_3\cdots A_{m-3}A_{m-1})|^2|\det(P(\omega))|^2.\]

Now we prove the claim about the nullity. By similarity, $\textup{nullity}(\textup{Re}(\omega \mathcal{C}_{A,B}))=\textup{nullity}(\textup{Re}(\mathcal{C}_{A,Y}))$. Consider a vector $x=[x_j]\in M_{m,1}(\mathbb C^n)$ such that $\textup{Re}(\mathcal{C}_{A,Y})x=0.$ We show that $x_m=0$. Inspecting the first $m-1$ block rows of this equation, we conclude that \begin{equation}\label{first}A_1x_2+Y_0^*x_m=0\end{equation} and if $m>2$, \begin{equation}\label{second}A_k^*x_k+A_{k+1}x_{k+2}+Y_k^*x_m=0\ \textup{for all}\ k\in \{1,\ldots, m-2\}.\end{equation} Assume $ \omega\in \mathbb T\setminus \sigma(P)$. If $m=2$, then \eqref{first} implies \[0=\omega^2A_1^{-1}(A_1+\overline{\omega}^2B_0^*)x_2=(\omega^2 I_n+A_1^{-1}B_0^*)x_2=P(\omega) x_2.\] Since $\omega\in \mathbb T\setminus \sigma(P)$, $x_2=0$. In this case, $\textup{nullity}(\textup{Re}(\omega \mathcal{C}_{A,B}))=\textup{nullity}(\textup{Re}( \mathcal{C}_{A,Y}))\leq n$. Now suppose $m>2$ is even. Upon rewriting \eqref{second}, we obtain
\begin{equation}\label{third}x_{k+2}=-A_{k+1}^{-1}A_{k}^*x_{k}-A_{k+1}^{-1}Y_{k}^*x_m\ \textup{for all}\ k\in \{1,\ldots, m-2\}.\end{equation}
By successively applying \eqref{third} and \eqref{PTILDE} and at the last step \eqref{first} and \eqref{PTILDE}, we obtain \[\begin{array}{rcl}x_{m}&=&-A_{m-1}^{-1}A_{m-2}^*x_{m-2}-A_{m-1}^{-1}Y_{m-2}^*x_m\\ &=&-A_{m-1}^{-1}A_{m-2}^*x_{m-2}-\widetilde{P}_{\frac{m}{2}}x_m\\
&=&A_{m-1}^{-1}A_{m-2}^*A_{m-3}^{-1}A_{m-4}^*x_{m-4}-(\widetilde{P}_{\frac{m}{2}-1}+\widetilde{P}_{\frac{m}{2}})x_m\\ &\vdots& \\ &=& -(-1)^{\frac{m}{2}-2}A_{m-1}^{-1}A_{m-2}^*\cdots A_4^*A_3^{-1}A_2^*x_2-\sum_{j=2}^{\frac{m}{2}}\widetilde{P}_jx_m\\ &=&-\sum_{j=1}^{\frac{m}{2}}\widetilde{P}_jx_m\end{array}.\]

\noindent By \eqref{pomega},
\[P(\omega)x_m=\omega^m[\omega^{-m}P(\omega)]x_m=\omega^m\widetilde{P}(1)x_m=\omega^m(I_n+\sum_{j=1}^{\frac{m}{2}}\widetilde{P}_j)x_m=0.\]
 Since $\omega \in \mathbb {T}\setminus\sigma(P)$, $x_m=0$. Hence, the vector $[x_j]_{j=1}^{m-1}\in M_{m-1,1}(\mathbb C^n)$ is in the kernel of the matrix $H$ as defined in the proof of Proposition \ref{prop:2}. Thus, \[\begin{array}{rcl}\textup{nullity}(\textup{Re}(\omega \mathcal{C}_{A,B}))&=&\textup{nullity}(\textup{Re}( \mathcal{C}_{A,Y}))\\ &\leq& \textup{nullity}(H)\\
&=&(m-1)n-\textup{rank}(H)\\
&=&(m-1)n-(m-2)n\\
&=&n. \end{array}\] \end{proof}

\subsection{Zero-dilation index of $\mathcal{C}_{A,B}$}
We are now ready to prove a generalization of \cite[Theorem 3.2]{hG16}.

\begin{theorem}\label{main:zdi}
Let $m,n\in \mathbb N$ where $m\geq 2$. Let $A=\bigoplus_{j=1}^{m-1}A_j\in M_{m-1}(M_n)$ be nonsingular and $B=[B_j]_{j=0}^{m-1}\in M_{1,m}(M_n)$. 
\begin{enumerate}
\item[(i)] If $m\geq 3$ is odd, then $\frac{(m-1)n}{2}\leq d(\mathcal{C}_{A,B})\leq \frac{(m+1)n}{2}.$
\item[(ii)] If $m\geq 2$ is even, then
 $ d(\mathcal{C}_{A,B})=\frac{mn}{2}.$
\end{enumerate}
\end{theorem}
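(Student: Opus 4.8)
The plan is to work throughout with the characterization $d(\mathcal C_{A,B})=\min\{i_{\geq 0}(\textup{Re}(\omega\mathcal C_{A,B})):\omega\in\mathbb T\}$, noting that since $A$ is nonsingular every $A_j$ is nonsingular, so the hypotheses of Propositions \ref{prop:2} and \ref{prop:4} are met. I would treat the odd and even cases separately, in each case pairing an upper bound obtained from a nullity estimate with a lower bound obtained from an inertia count.

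For (i), the upper bound is immediate: Proposition \ref{prop:2} gives $\textup{nullity}(\textup{Re}(\omega\mathcal C_{A,B}))\leq n$ for every $\omega\in\mathbb T$, so Proposition \ref{approach} applied with $r=n$ and matrix size $mn$ yields $d(\mathcal C_{A,B})\leq\frac{mn+n}{2}=\frac{(m+1)n}{2}$. For the lower bound I would analyze the principal submatrix $H$ from the proof of Proposition \ref{prop:2}. Since $m$ is odd, $H$ has an even number $m-1$ of zero diagonal blocks, and conjugating by the signature matrix $S=\bigoplus_{j=1}^{m-1}(-1)^jI_n$ sends each off-diagonal block to its negative while fixing the vanishing diagonal blocks, so $SHS=-H$; as $S$ is unitary and Hermitian this forces $i_+(H)=i_-(H)$. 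Proposition \ref{prop:2} also shows $H$ is nonsingular, so $i_+(H)=i_-(H)=\frac{(m-1)n}{2}$. Because $H$ is a principal submatrix of $\textup{Re}(\mathcal C_{A,Y})$, which is unitarily similar to $\textup{Re}(\omega\mathcal C_{A,B})$ by Proposition \ref{prop:1}, the inclusion principle for Hermitian matrices gives $i_{\geq 0}(\textup{Re}(\omega\mathcal C_{A,B}))\geq i_+(\textup{Re}(\mathcal C_{A,Y}))\geq i_+(H)=\frac{(m-1)n}{2}$ for every $\omega$, whence $d(\mathcal C_{A,B})\geq\frac{(m-1)n}{2}$.

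For (ii), the heart of the argument is that at every $\omega\notin\sigma(P)$ the matrix $\textup{Re}(\omega\mathcal C_{A,B})$ has perfectly balanced inertia $\bigl(\tfrac{mn}{2},\tfrac{mn}{2},0\bigr)$. I would deduce this from Haynsworth's inertia additivity formula applied to the same Schur-complement reduction used in Lemma \ref{dettri}: each pivot block $2\,\textup{Re}(J_2(0)\otimes A_j)=\begin{bmatrix}0&A_j\\A_j^*&0\end{bmatrix}$ is nonsingular and contributes inertia $(n,n,0)$, while the Schur complement is again of the form $2\,\textup{Re}(\mathcal C_{\widetilde A,W})$, so iterating down to the $m=2$ base accumulates exactly $\frac m2$ copies of $(n,n,0)$. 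The base block, being nonsingular at such $\omega$ by the determinant formula of Proposition \ref{prop:4}, is congruent to a block-antidiagonal matrix and hence is also balanced. Since that determinant formula shows $\textup{Re}(\omega\mathcal C_{A,B})$ is nonsingular precisely for $\omega\notin\sigma(P)$, at these $\omega$ we get $i_{\geq 0}=\frac{mn}{2}$, giving $d(\mathcal C_{A,B})\leq\frac{mn}{2}$.

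The remaining point, and the main obstacle, is the matching lower bound $i_{\geq 0}(\textup{Re}(\omega\mathcal C_{A,B}))\geq\frac{mn}{2}$ for all $\omega$, because balanced inertia can genuinely fail at the exceptional points $\omega\in\sigma(P)$, where a zero eigenvalue may split entirely to the positive side. I would circumvent a direct count at these points by a semicontinuity argument: the number of strictly negative eigenvalues $i_-$ is lower semicontinuous, since strictly negative eigenvalues persist under small perturbations, and because $\sigma(P)$ is finite every $\omega_0\in\mathbb T$ is a limit of points $\omega\notin\sigma(P)$ at which $i_-(\textup{Re}(\omega\mathcal C_{A,B}))=\frac{mn}{2}$; hence $i_-(\textup{Re}(\omega_0\mathcal C_{A,B}))\leq\frac{mn}{2}$, so $i_{\geq 0}(\textup{Re}(\omega_0\mathcal C_{A,B}))=mn-i_-(\textup{Re}(\omega_0\mathcal C_{A,B}))\geq\frac{mn}{2}$. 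Combining the two inequalities gives $d(\mathcal C_{A,B})=\frac{mn}{2}$.
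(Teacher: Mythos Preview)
Your argument is correct, but for both lower bounds you work much harder than the paper does. The paper's key observation is that the definition of $d(\cdot)$ lets you bound it from below simply by exhibiting a zero principal submatrix of $\mathcal C_{A,B}$ \emph{itself}: selecting the block rows and columns with odd block indices $1,3,5,\ldots$ yields $0_{(m-1)n/2}$ (odd $m$) or $0_{mn/2}$ (even $m$) as a principal submatrix, and that one line finishes both lower bounds. Your route for (i)---the signature conjugation $SHS=-H$ forcing balanced inertia of $H$, followed by Cauchy interlacing---and your route for (ii)---the full Haynsworth inertia recursion plus the semicontinuity of $i_-$ to handle the exceptional $\omega$---are valid and actually yield the sharper information that $i_{\geq 0}(\textup{Re}(\omega\mathcal C_{A,B}))$ is exactly $\tfrac{mn}{2}$ at generic $\omega$ in the even case, but they are considerably longer. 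Likewise, for the upper bound in (ii) the paper simply invokes Proposition~\ref{approach} with $r=0$ at any $\omega\notin\sigma(P)$, whereas you reprove the balanced inertia from scratch via iterated Schur complements. In short: your proof is correct and self-contained at the level of inertia counts, while the paper's is shorter because it goes back to the defining dilation condition for the lower bounds and uses only nonsingularity (not balanced inertia) for the upper bound in (ii).
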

\begin{proof} 
\item[(i)] Assume $m\geq 3$ is odd. The principal submatrix of $\mathcal{C}_{A,B}$ whose rows and columns are indexed by $\{1,2,\ldots,n\}\cup\{2n+1,2n+2,\ldots,3n\}\cup\cdots\cup\{(m-1)n+1,(m-1)n+2,\ldots,mn\}$ is given by $\begin{bmatrix} & & 0_{\frac{(m-1)n}{2}} & &  0\\  B_0& B_2& \cdots& B_{m-3}& B_{m-1}\end{bmatrix}$. From this presentation, $0_{\frac{(m-1)n}{2}}$ is a principal submatrix of $\mathcal{C}_{A,B}$, and so $\frac{(m-1)n}{2}\leq d(\mathcal{C}_{A,B}) $. Now by Proposition \ref{prop:2}, $\textup{nullity}(\textup{Re}(\omega \mathcal{C}_{A,B}))\leq n$ for all $\omega \in \mathbb T.$ Hence, $d(\mathcal{C}_{A,B})\leq \frac{mn+n}{2}=\frac{(m+1)n}{2}$ due to Proposition \ref{approach}.
\item[(ii)] Assume $m\geq 2$ is even. The principal submatrix of $\mathcal{C}_{A,B}$ whose rows and columns are indexed by $\{1,2,\ldots,n\}\cup\{2n+1,2n+2,\ldots,3n\}\cup\cdots\cup\{(m-2)n+1,(m-2)n+2,\ldots,(m-1)n\}$ is $0_{\frac{mn}{2}}$. This proves the lower bound $\frac{mn}{2}\leq d(\mathcal{C}_{A,B})$. To prove that equality holds, we let $P(z)$ be as defined in Proposition \ref{prop:4}. For a given $\omega \in \mathbb T$, $\textup{Re}(\omega {\cal C}_{A,B})$ is singular if and only if $\omega \in \sigma(P)$ due to Proposition \ref{prop:4}. Since $\mathbb T\cap \sigma(P)\subset \sigma(P)$ has at most finitely many elements (see \cite[Chapter 1]{matpoly}), there exists $\zeta\in \mathbb T$ for which $\textup{Re}(\zeta \mathcal{C}_{A,B})$ is nonsingular. By Proposition \ref{approach}, $d(\mathcal{C}_{A,B})\leq \frac{mn}{2}.$ Therefore, $d(\mathcal{C}_{A,B})=\frac{mn}{2}.$

\end{proof}


In the following example, we show that it is possible to get as zero-dilation index each integer value between $\frac{(m-1)n}{2} $ and $\frac{(m+1)n}{2}$ in Theorem \ref{main:zdi}(i).

\begin{example}\normalfont
Let $m,n\in \mathbb N$ where $m\geq 3$ is odd. Let $A=\oplus_{j=1}^{m-1}I_n=I_{(m-1)n}$ and $B=\begin{bmatrix} 0& \cdots& 0& H/2\end{bmatrix}\in M_{1,m}(M_n)$ where $H\in M_n$ is a Hermitian matrix to be determined later. By Proposition \ref{prop:1} and definition of $B$, $\textup{Re}(\omega \mathcal{C}_{A,B})$ is unitarily similar to $\textup{Re}(\mathcal{C}_{A,\omega B})$ for any $\omega \in \mathbb T$. Hence, \cite[Theorem 2.2]{hG14b} implies \[d(\mathcal{C}_{A,B})=\min\{i_{\geq 0}(\textup{Re}(\mathcal{C}_{A,\omega B})):\omega\in\mathbb{T}\}=\min\{i_{\geq 0}(M_\theta):\theta\in[0,2\pi)\}\] where $M_\theta:=2\textup{Re}(\mathcal{C}_{A,e^{i\theta} B})=[J_{m}(0)+J_m(0)^*]\otimes I_n+ (0_{m-1}\oplus [1])\otimes (\cos\theta  H)$ for any $\theta\in [0,2\pi)$. We partition $M_\theta$ so that an application of Haynsworth's Theorem (see \cite[Problem 4.5.P21]{HJ1}) allows us to solve $i_{\geq 0}(M_\theta ).$ Let $N=[J_{m-1}(0)+J_{m-1}(0)^*]\otimes I_n$ which is nonsingular since $m$ is odd by assumption. Direct calculations reveal that the Schur complement of $N$ in $M_\theta$ is equal to $\cos\theta H$. By Haynsworth's Theorem, \[i_{\geq 0}(M_\theta)=i_+(N)+i_{\geq 0}(\cos\theta H)=\begin{cases}i_+(N)+ n,& \textup{if}\ \theta=\frac{\pi}{2}\ \textup{or}\ \theta=\frac{3\pi}{2}\\ i_+(N)+i_{\geq 0}(H),& \textup{if}\ \theta\in \left[0,\frac{\pi}{2}\right)\cup \left(\frac{3\pi}{2},2\pi\right)\\ i_+(N)+i_{\leq 0}(H),& \textup{if}\ \theta\in \left(\frac{\pi}{2},\frac{3\pi}{2}\right).\end{cases}\] It can also be computed that $i_{+}(N)=\frac{(m-1)n}{2}$. Hence, 
\[d(\mathcal{C}_{A,B})=\frac{(m-1)n}{2}+\textup{min}\{n, i_{\geq 0}(H),i_{\leq 0}(H)\}.\]


\noindent If $H$ is chosen to be $H=0_k\oplus-I_{n-k}$ where $k\in\{0,\ldots,n\}$ (a direct summand is absent if its subscript is $0$), then 
\[d(\mathcal{C}_{A,B})=\frac{(m-1)n}{2}+\min\{k,n\}=\frac{(m-1)n}{2}+k.\]
It follows that $d(\mathcal{C}_{A,B})$ returns all integer values between $\frac{(m-1)n}{2}$ and $\frac{(m+1)n}{2}.$
\end{example}




\section{Block KMS matrices}





In this section, we study properties of block KMS matrices. 

\subsection{Similarity or unitary similarity of block KMS matrices}
For $m=2$, the similarity or unitary similarity of the block KMS matrices is determined by the the rank or singular values of the matrix parameters.

\begin{proposition}
Let $A,B\in M_n$. 
\begin{enumerate}
    \item[(i)] $\mathcal{K}_2(A)$ and $\mathcal{K}_2(B)$ are similar if and only if $\mbox{rank}(A)=\mbox{rank}(B).$
    \item[(ii)] $\mathcal{K}_2(A)$ and $\mathcal{K}_2(B)$ are unitarily similar if and only if $A$ and $B$ have the same singular values.
\end{enumerate}
\end{proposition}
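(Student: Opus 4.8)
The plan is to treat the two parts separately, exploiting the fundamental observation that $\mathcal{K}_2(A)$ is a square-zero nilpotent: indeed $\mathcal{K}_2(A)^2=\begin{bmatrix}0&A\\0&0\end{bmatrix}^2=0$.

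For part (i) I would invoke the Jordan canonical form. Because $\mathcal{K}_2(A)^2=0$, the only Jordan blocks that can occur are $J_1(0)$ and $J_2(0)$. If there are $a$ blocks of size two and $b$ of size one, then $2a+b=2n$, and since each $J_2(0)$ contributes rank one while each $J_1(0)$ contributes rank zero, $a=\textup{rank}(\mathcal{K}_2(A))=\textup{rank}(A)$. Thus the Jordan form of $\mathcal{K}_2(A)$ is completely determined by $\textup{rank}(A)$, consisting of $\textup{rank}(A)$ copies of $J_2(0)$ and $2n-2\,\textup{rank}(A)$ copies of $J_1(0)$. Since two matrices are similar precisely when they share a Jordan form, $\mathcal{K}_2(A)$ and $\mathcal{K}_2(B)$ are similar if and only if $\textup{rank}(A)=\textup{rank}(B)$.

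For part (ii) I would handle each implication through a singular-value computation. For the forward direction, unitary similarity preserves singular values, so it suffices to read off the singular values of $\mathcal{K}_2(A)$ from $\mathcal{K}_2(A)^*\mathcal{K}_2(A)=0_n\oplus A^*A$, whose eigenvalues are the squares of the singular values of $A$ together with $n$ zeros; hence the singular values of $\mathcal{K}_2(A)$ are exactly those of $A$ padded by $n$ zeros. Equating the multisets for $\mathcal{K}_2(A)$ and $\mathcal{K}_2(B)$ and cancelling the common zeros forces $A$ and $B$ to have identical singular values. For the reverse direction, I would reduce $\mathcal{K}_2(A)$ to a canonical form via the singular value decomposition $A=U\Sigma V^*$: taking the unitary $W=U\oplus V$ gives $W^*\mathcal{K}_2(A)W=\mathcal{K}_2(U^*AV)=\mathcal{K}_2(\Sigma)$. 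If $A$ and $B$ share the same singular values they produce the same diagonal $\Sigma$, so $\mathcal{K}_2(A)$ and $\mathcal{K}_2(B)$ are each unitarily similar to $\mathcal{K}_2(\Sigma)$, hence to one another.

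No step here poses a genuine obstacle; the result follows from standard invariants (rank for similarity of square-zero nilpotents, singular values for unitary similarity). The only care required is bookkeeping of multiplicities—tracking the $n$ spurious zero singular values and the block counts in the Jordan form—and fixing the orientation of the block-diagonal conjugation so that $U^*AV=\Sigma$ rather than an adjoint.
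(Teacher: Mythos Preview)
Your proof is correct. Part (ii) is essentially the paper's argument: the paper computes $\mathcal{K}_2(A)\mathcal{K}_2(A)^*=AA^*\oplus 0_n$ rather than $\mathcal{K}_2(A)^*\mathcal{K}_2(A)=0_n\oplus A^*A$, and for the converse it builds a direct unitary similarity from $\mathcal{K}_2(A)$ to $\mathcal{K}_2(B)$ using unitaries $U,V$ with $UAV=B$, whereas you route both through the canonical form $\mathcal{K}_2(\Sigma)$; these are the same idea.

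Part (i) differs mildly. The paper handles the converse constructively: equal ranks give nonsingular $P,Q$ with $PAQ=B$, and then $(P\oplus Q^{-1})\mathcal{K}_2(A)(P\oplus Q^{-1})^{-1}=\mathcal{K}_2(B)$ exhibits the similarity explicitly. You instead classify: $\mathcal{K}_2(A)^2=0$ forces the Jordan form to consist of $\textup{rank}(A)$ copies of $J_2(0)$ and the rest $J_1(0)$, so equal ranks force equal Jordan forms. Your route is cleaner conceptually and makes the invariant transparent; the paper's route is more hands-on and yields an explicit conjugating matrix. Either is perfectly adequate here.
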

\begin{proof}
\item[(i)] If $\mathcal{K}_2(A)$ and $\mathcal{K}_2(B)$ are similar, then \[\mbox{rank}(A)=\mbox{rank}(\mathcal{K}_2(A))=\mbox{rank}(\mathcal{K}_2(B))=\mbox{rank}(B).\] Conversely, if $A$ and $B$ have equal ranks, then there exist nonsingular $P,Q\in M_n  $ such that $PAQ=B$ \cite[Section 0.4.6]{HJ1}. Hence, the following similarity holds
\[(P\oplus Q^{-1})\mathcal{K}_2(A) (P\oplus Q^{-1})^{-1}=\begin{bmatrix}0& PAQ\\ 0& 0\end{bmatrix}=\mathcal{K}_2(B).\]

\item[(ii)] If $U^*\mathcal{K}_2(A)U=\mathcal{K}_2(B)$ for some unitary $U\in M_2(M_n  )$, then \[U^*(AA^*\oplus 0_n)U=U^*\mathcal{K}_2(A)\mathcal{K}_2(A)^*U=\mathcal{K}_2(B)\mathcal{K}_2(B)^*=BB^*\oplus 0_n.\] Hence, $A$ and $B$ have the same singular values. Conversely, if $A$ and $B$ have the same singular values, then there exist unitary $U,V\in M_n  $ such that $UAV=B$. Note that $U\oplus V^*$ is unitary and the following unitary similarity holds
\[(U\oplus V^{*})\mathcal{K}_2(A) (U\oplus V^{*})^{*}=\begin{bmatrix}0& UAV\\ 0& 0\end{bmatrix}=\mathcal{K}_2(B).\]\end{proof}



The next example illustrates that the rank and singular values of the matrix parameters are not enough to characterize similarity and unitary similarity of block KMS matrices when $m\geq 3$. 

\begin{example}\label{exrank}
\normalfont
Let $m\geq 3$. Consider  $A=J_2(0)$ and $B=\textup{diag}(1,0).$ Note that $A$ and $B$ have the same singular values; hence, they necessarily have equal ranks. For any $k\geq 2$, $A^k=0$ while $B^k=B$. Direct calculations reveal that  $(\mathcal{K}_m(A))^2=0$ while $(\mathcal{K}_m(B))^2\neq 0$. It follows that $\mathcal{K}_m(A)$ and $\mathcal{K}_m(B) $ are not similar and not unitarily similar as well.
\end{example}

The next part is devoted to determining the complete set of conditions for the matrix parameters that characterize the similarity and unitary similarity of block KMS matrices. 



\begin{proposition}\label{S}
Let $m,n\in \mathbb N$ where $m\geq 2$ and let $A\in M_n$. Then $\mathcal{K}_m(A)=(I_{mn}-J_m(0)\otimes A)^{-1}-I_{mn}$.
\end{proposition}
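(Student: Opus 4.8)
The plan is to verify the identity $\mathcal{K}_m(A)=(I_{mn}-J_m(0)\otimes A)^{-1}-I_{mn}$ by recognizing the right-hand side as a truncated geometric series in the nilpotent matrix $J_m(0)\otimes A$. First I would set $N:=J_m(0)\otimes A$ and observe that, since $J_m(0)$ is the $m$-by-$m$ nilpotent Jordan block with $J_m(0)^m=0$, the Kronecker product satisfies $N^k=J_m(0)^k\otimes A^k$ for every $k\geq 0$, and in particular $N^m=J_m(0)^m\otimes A^m=0$. Thus $N$ is nilpotent, so $I_{mn}-N$ is invertible and its inverse is given by the finite Neumann series $(I_{mn}-N)^{-1}=\sum_{k=0}^{m-1}N^k=\sum_{k=0}^{m-1}J_m(0)^k\otimes A^k$.

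The key step is then to identify this sum explicitly as a block matrix. Since $J_m(0)^k$ is the matrix with $I$'s (entries equal to $1$) on the $k$-th superdiagonal and zeros elsewhere, the term $J_m(0)^k\otimes A^k$ places the block $A^k$ along the $k$-th block superdiagonal. Summing over $k=0,\ldots,m-1$ therefore yields the block upper triangular matrix whose $(i,j)$ block is $A^{j-i}$ for $j\geq i$ and $0$ for $j<i$; the diagonal blocks ($k=0$) contribute $I_n$ along the main block diagonal. Subtracting $I_{mn}$ removes exactly these diagonal identity blocks, leaving the block with $A^{j-i}$ strictly above the diagonal and zeros on and below it. Comparing with the definition in \eqref{def:kms}, this is precisely $\mathcal{K}_m(A)$, completing the identity.

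I do not expect a genuine obstacle here; the argument is a clean bookkeeping exercise once the nilpotency of $N$ is noted. The only point requiring mild care is the indexing convention for the Kronecker product $J_m(0)^k\otimes A^k$, to confirm that the block $A^k$ lands on the $k$-th block superdiagonal (consistent with the $(i,j)$-block-equals-$A^{j-i}$ reading of $\mathcal{K}_m(A)$) rather than the subdiagonal. The mixed-product property $N^k=(J_m(0)\otimes A)^k=J_m(0)^k\otimes A^k$ should be stated explicitly since it is what makes the Neumann series collapse into a manifestly block-Toeplitz form. Once these conventions are fixed, the chain of equalities $(I_{mn}-J_m(0)\otimes A)^{-1}-I_{mn}=\sum_{k=1}^{m-1}J_m(0)^k\otimes A^k=\mathcal{K}_m(A)$ follows directly.
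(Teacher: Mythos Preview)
Your proof is correct, and it takes a genuinely different route from the paper. The paper argues by induction on $m$: setting $S_m:=I_{mn}-J_m(0)\otimes A$, it verifies the equivalent identity $(I_{mn}+\mathcal{K}_m(A))S_m=I_{mn}$ by partitioning both factors into an $(m-1)$-block corner and a final block row/column, then invoking the inductive hypothesis on the $(m-1)$-block piece and checking the remaining column by hand. Your argument instead exploits the nilpotency of $N=J_m(0)\otimes A$ to write $(I_{mn}-N)^{-1}$ as the finite Neumann sum $\sum_{k=0}^{m-1}N^k$, applies the mixed-product rule $N^k=J_m(0)^k\otimes A^k$, and reads off the block-Toeplitz structure directly. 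Your approach is shorter and more conceptual, making the block-Toeplitz form of $\mathcal{K}_m(A)$ transparent in one stroke; the paper's induction is more hands-on but requires no prior familiarity with Neumann series or Kronecker identities. Both are entirely rigorous, and the indexing caveat you flag (that $J_m(0)^k\otimes A^k$ lands on the $k$-th block superdiagonal) is correctly resolved by the upper-triangular convention for $J_m(0)$ used in the paper.
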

\begin{proof} For convenience, let $S_r:=I_{rn}-J_r(0)\otimes A$ for any $r\geq 2$. We consider the equivalent claim $(I_{mn}+\mathcal{K}_m(A))S_m=I_{mn}$ and prove it by induction on $m$. If $m=2$, then 
\[ (I_{2n}+\mathcal{K}_2(A))S_2=\begin{bmatrix} I_n& A\\ 0& I_n\end{bmatrix}\begin{bmatrix}I_n& -A\\ 0& I_n\end{bmatrix}=\begin{bmatrix}I_n& 0\\ 0& I_n\end{bmatrix}=I_{2n}.\] Suppose $m\geq 3$ and the statement holds for block matrices of size at most $m-1$. Let \begin{equation}\label{KL}K=\begin{bmatrix} A^{m-1}\\ \vdots\\ A^2\\ A\end{bmatrix},\ L=\begin{bmatrix}  0\\ \vdots\\ 0\\ -A\end{bmatrix}\in M_{m-1,1}(M_n  ).\end{equation} Then
\[\begin{array}{rcl}(I_{mn}+\mathcal{K}_m(A))S_m&=&\begin{bmatrix}I_{(m-1)n}+\mathcal{K}_{m-1}(A) & K\\ 0& I_n\end{bmatrix}\begin{bmatrix} S_{m-1} & L\\ 0& I_n\end{bmatrix}\\
&=&\begin{bmatrix} (I_{(m-1)n}+\mathcal{K}_{m-1}(A))S_{m-1}& M\\ 0& I_n\end{bmatrix}\end{array}\]
where the claim follows due to the inductive hypothesis and the calculation $M=(I_{(m-1)n}+\mathcal{K}_{m-1}(A))L+K=0$.
\end{proof}

\begin{proposition}\label{kron}
Let $m,n\in \mathbb N$ where $m\geq 2$ and let $A,B\in M_n  $. Then $\mathcal{K}_m(A)$ and $\mathcal{K}_m(B)$ are similar (unitarily similar) if and only if $J_m(0)\otimes A$ and $J_m(0)\otimes B$ are similar (unitarily similar). 
\end{proposition}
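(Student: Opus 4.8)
The plan is to reduce the similarity (respectively unitary similarity) of $\mathcal{K}_m(A)$ and $\mathcal{K}_m(B)$ to that of $J_m(0)\otimes A$ and $J_m(0)\otimes B$ by invoking Proposition \ref{S}, which expresses $\mathcal{K}_m(A)=(I_{mn}-J_m(0)\otimes A)^{-1}-I_{mn}$. Writing $N_A:=J_m(0)\otimes A$ and $N_B:=J_m(0)\otimes B$ for brevity, the identity becomes $I_{mn}+\mathcal{K}_m(A)=(I_{mn}-N_A)^{-1}$, and similarly for $B$. The key observation is that all four of the maps $X\mapsto I+X$, $X\mapsto (I-X)^{-1}$, and their inverses preserve both similarity and unitary similarity: conjugation by a fixed invertible (respectively unitary) matrix $T$ commutes with adding $I_{mn}$ and with taking inverses, since $T^{-1}(I-X)^{-1}T=(I-T^{-1}XT)^{-1}$ whenever the inverse exists.

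The forward direction is the easy half. First I would suppose $T^{-1}\mathcal{K}_m(A)T=\mathcal{K}_m(B)$ for some invertible $T$. Adding $I_{mn}$ to both sides and conjugating gives $T^{-1}(I_{mn}+\mathcal{K}_m(A))T=I_{mn}+\mathcal{K}_m(B)$, i.e. $T^{-1}(I_{mn}-N_A)^{-1}T=(I_{mn}-N_B)^{-1}$ by Proposition \ref{S}. Taking inverses of both sides yields $T^{-1}(I_{mn}-N_A)T=I_{mn}-N_B$, and subtracting $I_{mn}$ from both sides produces $T^{-1}N_A T=N_B$, which is exactly the similarity of $J_m(0)\otimes A$ and $J_m(0)\otimes B$. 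The unitary case is identical, taking $T=U$ unitary throughout and using $U^*=U^{-1}$ so that the conjugations and the adjoint operation are compatible.

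The converse runs the same chain of equalities in reverse, which requires no new ideas: starting from $T^{-1}N_A T=N_B$, I would add $I_{mn}$, take inverses (both $I_{mn}-N_A$ and $I_{mn}-N_B$ are invertible since $J_m(0)$ is nilpotent, so these are unipotent and hence invertible), apply Proposition \ref{S}, and subtract $I_{mn}$ to recover $T^{-1}\mathcal{K}_m(A)T=\mathcal{K}_m(B)$. The one point I would verify explicitly, so that the step $T^{-1}X^{-1}T=(T^{-1}XT)^{-1}$ is legitimate, is that the relevant matrices are invertible; this is guaranteed throughout because $N_A$ and $N_B$ are nilpotent Kronecker products (strictly upper block-triangular), so $I_{mn}-N_A$ and $I_{mn}-N_B$ are always invertible regardless of $A$ and $B$. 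I do not anticipate a genuine obstacle here: the entire argument is a formal manipulation of a bijective correspondence between $\mathcal{K}_m(A)$ and $N_A$ that respects conjugation, and the only care needed is to keep track of invertibility so that the inversions are justified and to handle the invertible and unitary cases in parallel without conflating them.
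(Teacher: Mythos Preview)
Your proposal is correct and follows essentially the same approach as the paper: both use Proposition~\ref{S} to write $I_{mn}+\mathcal{K}_m(A)=(I_{mn}-J_m(0)\otimes A)^{-1}$ and then observe that conjugation by an invertible (or unitary) matrix commutes with the operations $X\mapsto I+X$ and $X\mapsto X^{-1}$, running the chain in both directions. Your explicit remark that $J_m(0)\otimes A$ is nilpotent (hence $I_{mn}-J_m(0)\otimes A$ is invertible) is a welcome justification that the paper leaves implicit.
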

\begin{proof}
Suppose $P^{-1}\mathcal{K}_m(A)P=\mathcal{K}_m(B)$ for some nonsingular (unitary) $P\in M_n  $. By Proposition \ref{S}, 
\[\begin{array}{rcl}
J_m(0)\otimes B&=&I_{mn}-(I_{mn}+\mathcal{K}_m(B))^{-1}\\
&=&I_{mn}-(I_{mn}+P^{-1}\mathcal{K}_m(A)P)^{-1}\\
&=&P^{-1}(I_{mn}-[I_{mn}+\mathcal{K}_m(A))^{-1}]P\\
&=&P^{-1}(J_m(0)\otimes A)P.
\end{array}\]
Hence, $J_m(0)\otimes A$ and $J_m(0)\otimes B$ are similar (unitarily similar).

Conversely, suppose $J_m(0)\otimes B=P^{-1}(J_m(0)\otimes A) P$ for some nonsingular (unitary) $P\in M_n  $. By Proposition \ref{S}, 
\[\begin{array}{rcl} \mathcal{K}_m(B)&=&(I_{mn}-J_m(0)\otimes B)^{-1}-I_{mn}\\
&=&(I_{mn}-P^{-1}(J_m(0)\otimes A)P)^{-1}-I_{mn}\\
&=&P^{-1}((I_{mn}-J_m(0)\otimes A)^{-1}-I_{mn})P\\
&=&P^{-1}\mathcal{K}_m(A)P.\\
\end{array}\]
It follows that $\mathcal{K}_m(A)$ and $\mathcal{K}_m(B)$ are similar (unitarily similar).
\end{proof}



For each $\lambda\in\sigma(A)$, let $w_1(A,\lambda):=\mbox{nullity}(A-\lambda I_n)$ and define $s_k (A, \lambda )= 0$ for all $k > w_1(A, \lambda)$ and let \[s_1(A,\lambda)\geq s_2(A,\lambda) \geq\cdots\geq s_{w_1(A,\lambda)}(A,\lambda)>0\] be the nonincreasingly ordered list of sizes of Jordan blocks associated to $\lambda$ (this is called the Segre characteristic of $A$ associated with $\lambda$). The Jordan Canonical Form (JCF) Theorem \cite[Theorem 3.1.11]{HJ1} guarantees that the Segre characteristic for each eigenvalue completely determines the Jordan canonical form of $A$.

Let $m\in \mathbb N$ with $m\geq 2$ and $A\in M_n  $. By Proposition \ref{kron}, it suffices to characterize the JCF of $J_m(0)\otimes A$. Since $(J_m(0)\otimes A)^m=0$, the JCF of $J_m(0)\otimes A$ contains only nilpotent Jordan blocks of size at most $m$. To determine this nilpotent structure, it is convenient to recall the indicator function notation: for a given $\mathcal{S}\subseteq \mathbb{R},$ let $\mathbf{1}_{\mathcal S}(x)=\begin{cases}1,& \textup{if}\ x\in \mathcal{S}\\ 0,& \textup{otherwise}\end{cases}.$ 


\begin{proposition}\label{FGcount}
Let $m,n\in \mathbb N$ where $m\geq 2$ and let $A\in M_n  $. If $0\in \sigma(A)$, let $\alpha$ be the algebraic multiplicity of $0$; otherwise, $\alpha:=0$. For each $k\in\{1,\ldots,m\}$, let $N_k^A$ be the number of $J_k(0)$ blocks in the JCF of $J_m(0)\otimes A.$ If $k=m$, then \[N_m^A=n-\alpha+\sum_{j=1}^{w_1(A,0)}(|m-s_j(A,0)|+1)\cdot \mathbf{1}_{[1, s_j(A,0)]}(m).\] If $k<m$, then \[N_k^A=\sum_{j=1}^{w_1(A,0)}[2\cdot \mathbf{1}_{[1,\min\{m,s_j(A,0)\})}(k)+(|m-s_j(A,0)|+1)\cdot \mathbf{1}_{\{\min\{m,s_j(A,0)\}\}}(k)].\]

\end{proposition}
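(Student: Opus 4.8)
The plan is to reduce the entire count to the case of a single Jordan block of $A$ and then read off the nilpotent block structure of a tensor product of two Jordan blocks using the rank characterization of Jordan structure. By the JCF theorem there is a nonsingular $S$ with $S^{-1}AS=\bigoplus_{\lambda\in\sigma(A)}\bigoplus_{j=1}^{w_1(A,\lambda)}J_{s_j(A,\lambda)}(\lambda)$. Conjugating by $I_m\otimes S$ shows that $J_m(0)\otimes A$ is similar to $\bigoplus_{\lambda,j}\bigl(J_m(0)\otimes J_{s_j(A,\lambda)}(\lambda)\bigr)$, so $N_k^A$ is the sum, over all Jordan blocks $J_p(\lambda)$ of $A$ (with $p=s_j(A,\lambda)$), of the number of $J_k(0)$ blocks in the JCF of $J_m(0)\otimes J_p(\lambda)$; each such tensor factor is nilpotent because its $m$-th power vanishes. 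The tools I would use throughout are the identities $(X\otimes Y)^k=X^k\otimes Y^k$, $\textup{rank}(X\otimes Y)=\textup{rank}(X)\,\textup{rank}(Y)$, $\textup{rank}(J_\ell(0)^k)=\max(\ell-k,0)$, and the fact that for a nilpotent $N$ the number of $J_k(0)$ blocks equals $\textup{rank}(N^{k-1})-2\,\textup{rank}(N^k)+\textup{rank}(N^{k+1})$ (with $N^0=I$).

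For a nonzero eigenvalue $\lambda$, the block $J_p(\lambda)$ is invertible, so $\textup{rank}\bigl((J_m(0)\otimes J_p(\lambda))^k\bigr)=\max(m-k,0)\cdot p$. Substituting into the rank formula gives $N_k=0$ for $k<m$ and $N_m=p$, i.e. $J_m(0)\otimes J_p(\lambda)$ is similar to $p$ copies of $J_m(0)$. Summing over all Jordan blocks attached to nonzero eigenvalues, whose sizes total $n-\alpha$, produces exactly $n-\alpha$ blocks $J_m(0)$ and nothing smaller; this accounts for the summand $n-\alpha$ in $N_m^A$ and contributes nothing to $N_k^A$ when $k<m$.

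For the eigenvalue $0$ with $p=s_j(A,0)$ one has $\textup{rank}\bigl((J_m(0)\otimes J_p(0))^k\bigr)=\max(m-k,0)\max(p-k,0)$. When $1\le k<\min\{m,p\}$ all three ranks lie in the regime where both factors are nonnegative, and the polynomial identity $(a+1)(b+1)-2ab+(a-1)(b-1)=2$ with $a=m-k$, $b=p-k$ yields $N_k=2$. At $k=\min\{m,p\}$ two of the three ranks vanish while the remaining one equals $|m-p|+1$, giving $N_k=|m-p|+1$; and for $k>\min\{m,p\}$ every relevant rank vanishes, so $N_k=0$. Hence each zero-eigenvalue block $J_{s_j}(0)$ contributes precisely $2\cdot\mathbf{1}_{[1,\min\{m,s_j\})}(k)+(|m-s_j|+1)\cdot\mathbf{1}_{\{\min\{m,s_j\}\}}(k)$ blocks of size $k$.

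Assembling the pieces, summing the per-block contributions over $j=1,\ldots,w_1(A,0)$ and adding the nonzero-eigenvalue term gives the stated formula for $k<m$ directly. For $k=m$ the doubled indicator $\mathbf{1}_{[1,\min\{m,s_j\})}(m)$ is always $0$ because $\min\{m,s_j\}\le m$, while $\mathbf{1}_{\{\min\{m,s_j\}\}}(m)=1$ exactly when $s_j\ge m$, which is the same as $\mathbf{1}_{[1,s_j]}(m)=1$; combined with the $n-\alpha$ term this recovers the stated expression for $N_m^A$. I expect the main care to lie not in any deep idea but in the boundary bookkeeping at $k=\min\{m,s_j\}$ and in checking that the single closed form is correct in both regimes $m\le s_j$ and $m>s_j$ (equivalently, that it respects the symmetry $J_m(0)\otimes J_p(0)\cong J_p(0)\otimes J_m(0)$); the conversion from ranks to exact block counts is routine but must be done attentively at these edges.
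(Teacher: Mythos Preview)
Your proof is correct and follows the same overall strategy as the paper: reduce $J_m(0)\otimes A$ to a direct sum over the Jordan blocks of $A$, determine the nilpotent Jordan structure of each $J_m(0)\otimes J_p(\lambda)$, and sum the contributions. The only difference is that the paper quotes the Jordan structure of $J_m(0)\otimes J_p(\lambda)$ directly from \cite[Theorem~4.3.17]{HJ2}, whereas you rederive that structure from scratch via the rank identity $N_k=\textup{rank}(N^{k-1})-2\,\textup{rank}(N^k)+\textup{rank}(N^{k+1})$ together with $\textup{rank}(X\otimes Y)=\textup{rank}(X)\,\textup{rank}(Y)$ and $\textup{rank}(J_\ell(0)^k)=\max(\ell-k,0)$. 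Your route is thus more self-contained and elementary, trading a one-line citation for a short explicit computation; the paper's route is shorter but relies on a nontrivial external result. The boundary bookkeeping you flag at $k=\min\{m,s_j\}$ and the reconciliation of the $k=m$ case with the stated indicator $\mathbf{1}_{[1,s_j]}(m)$ are handled correctly.
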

\begin{proof}

Let $\lambda\in \sigma(A)$. By (iii) and (iv) of \cite[Theorem 4.3.17]{HJ2}, two cases arise depending on $\lambda$. If $\lambda \neq 0$, then the JCF of $J_m(0)\otimes A$ has $q$ copies of $J_m(0)$ for each $q\in\{s_1(A,\lambda),\ldots,s_{w_1(A,\lambda)}(A,\lambda)\}$. Otherwise, if $\lambda=0$ and $q\in\{s_1(A,0),\ldots,s_{w_1(A,0)}(A,0)\}$, then the JCF of $J_m(0)\otimes A$ has two copies of $J_k(0)$ for each $k\in\{1,\ldots,\min\{m,q\}-1\}$ (absent if $\min\{m,q\}=1$) and $|m-q|+1$ copies of $J_{\min\{m,q\}}(0)$. 

Assume $k=m$. By case (iii) of \cite[Theorem 4.3.17]{HJ2}, all nonzero eigenvalues of $A$ contributes $\displaystyle\sum_{\begin{subarray}{c} \lambda\in \sigma(A)\\  \lambda\neq 0\end{subarray}}\sum_{j=1}^{w_1(A,\lambda)}s_j(A,\lambda)=n-\alpha $ to $N_m^A$. By case (iv) of \cite[Theorem 4.3.17]{HJ2}, each $s_j(A,0)$ where $m\leq s_j(A,0)$ contributes $|m-s_j(A,0)|+1.$ This proves the formula for $N_m^A.$

Assume $k<m.$ By case (iv) of \cite[Theorem 4.3.17]{HJ2}, each $s_j(A,0)$ contributes $2$ to provided $k\leq \min\{m,s_j(A,0)\}-1$ and $|m-s_j(A,0)|+1$ provided $k=\min\{m,s_j(A,0)\}.$ This proves the formula for $N_k^A.$
\end{proof}

\begin{example}
\normalfont
Let $m\geq 2$ and $A=J_3(0)\oplus[0]$. Note that $s_1(A,0)=3$ and $s_2(A,0)=1$. Assume $m\in \{2,3\}$. By Proposition \ref{FGcount}, $N_m^A=4-4+|m-s_1(A,0)|+1+0=4-m$. Moreover, if $m=2$, then $N_1^A=4$ while if $m=3,$ then $N_1^A=5$ and $N_2^A=2$. Assume $m>3$. Then $N_m^A=0$. Direct calculations reveal that $N_1^A=m+2$, $N_2^A=2$, $N_3^A=m-2$, and $N_k^A=0$ for $k\in \{4,\ldots, m-1\}.$ 
\end{example}

Knowing $N_k^A$ completely determines the Segre characteristic of $J_m(0)\otimes A$. By Proposition \ref{kron}, we have the following characterization of similarity of block KMS matrices.

\begin{theorem}\label{sim}
Let $m,n\in \mathbb N$ where $m\geq 2$ and let $A,B\in M_n  $. Then $\mathcal{K}_m(A)$ and $\mathcal{K}_m(B)$ are similar if and only if $N_k^A=N_k^B $ for each $k\in\{1,\ldots,m\}.$ 
\end{theorem}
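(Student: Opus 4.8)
The plan is to reduce the statement to facts already established in the excerpt and then invoke the Jordan Canonical Form theorem. By Proposition~\ref{kron}, $\mathcal{K}_m(A)$ and $\mathcal{K}_m(B)$ are similar if and only if $J_m(0)\otimes A$ and $J_m(0)\otimes B$ are similar, so the entire problem is transported to the level of the Kronecker products. First I would recall that, since $(J_m(0)\otimes A)^m=0$ and likewise for $B$, both $J_m(0)\otimes A$ and $J_m(0)\otimes B$ are nilpotent; consequently their only eigenvalue is $0$, and their Jordan canonical forms consist exclusively of nilpotent Jordan blocks $J_k(0)$ with $1\leq k\leq m$.

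The crucial observation is that for a nilpotent matrix the JCF is completely determined by the multiset of its Jordan block sizes, i.e. by the numbers $N_k^{(\cdot)}$ for $k\in\{1,\ldots,m\}$. By the Jordan Canonical Form theorem \cite[Theorem 3.1.11]{HJ1}, two matrices are similar if and only if they have the same JCF up to permutation of blocks. For nilpotent matrices of the type at hand, this is equivalent to saying that for every $k\in\{1,\ldots,m\}$ the number of $J_k(0)$ blocks agrees. Thus $J_m(0)\otimes A$ and $J_m(0)\otimes B$ are similar if and only if $N_k^A=N_k^B$ for all $k\in\{1,\ldots,m\}$, where $N_k^A$ counts precisely the $J_k(0)$ blocks in the JCF of $J_m(0)\otimes A$ (and similarly for $B$). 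Note that the counts $N_k^A$ are well-defined by Proposition~\ref{FGcount}, which expresses them explicitly in terms of the Segre characteristic of $A$ at $0$ and its algebraic multiplicity of $0$.

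Putting these pieces together completes the argument: $\mathcal{K}_m(A)$ and $\mathcal{K}_m(B)$ are similar $\iff$ $J_m(0)\otimes A$ and $J_m(0)\otimes B$ are similar (Proposition~\ref{kron}) $\iff$ these two nilpotent matrices share the same multiset of Jordan block sizes (JCF theorem, nilpotent case) $\iff$ $N_k^A=N_k^B$ for each $k\in\{1,\ldots,m\}$. I do not anticipate a genuine obstacle here; the theorem is essentially a repackaging of Propositions~\ref{kron} and~\ref{FGcount} together with the JCF theorem. The only point requiring mild care is to state cleanly that, because both Kronecker products are nilpotent with blocks of size at most $m$, equality of the block counts $N_k$ over the finite range $k\in\{1,\ldots,m\}$ already captures the full JCF and hence similarity — there are no nonzero eigenvalues or larger blocks to worry about. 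This is exactly what the preceding development guarantees, so the proof is short.
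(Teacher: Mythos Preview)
Your proposal is correct and follows essentially the same approach as the paper: the paper likewise observes that the $N_k^A$ completely determine the Segre characteristic (hence the JCF) of the nilpotent matrix $J_m(0)\otimes A$, and then invokes Proposition~\ref{kron} to transfer this to similarity of $\mathcal{K}_m(A)$ and $\mathcal{K}_m(B)$. Your write-up simply makes explicit the routine JCF reasoning that the paper leaves implicit.
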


\begin{example}\normalfont
Let $A=J_2(0)$ and $B=\textup{diag}(1,0).$ If $m\geq 3$, then $N_m^A=0$ while $N_m^B=1$. By Theorem \ref{sim}, $\mathcal{K}_m(A)$ and $\mathcal{K}_m(B)$ are not similar. This is not surprising due to Example \ref{exrank}.
\end{example}

The next result has already been observed for $n=1$ in \cite[Proposition 2.1(b)]{hG13}.

\begin{corollary}
    Let $m,n\in \mathbb N$ where $m\geq 2$ and let $A,B\in M_n  $ be nonsingular. Then $\mathcal{K}_m(A)$ and $\mathcal{K}_m(B)$ are similar.
    
\end{corollary}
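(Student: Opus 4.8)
The plan is to reduce the claim to the similarity characterization already proved in Theorem \ref{sim}. By that theorem, $\mathcal{K}_m(A)$ and $\mathcal{K}_m(B)$ are similar if and only if $N_k^A = N_k^B$ for every $k\in\{1,\ldots,m\}$, so it suffices to show that when $A$ and $B$ are both nonsingular, the counts $N_k^A$ depend only on $m$ and $n$ and not on the particular matrix. The key observation is that nonsingularity of $A$ means $0\notin\sigma(A)$, so in the notation of Proposition \ref{FGcount} we have $\alpha=0$ and $w_1(A,0)=0$.

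With $\alpha=0$ and $w_1(A,0)=0$, every sum $\sum_{j=1}^{w_1(A,0)}(\cdots)$ appearing in Proposition \ref{FGcount} is empty, hence vanishes. First I would substitute into the formula for $N_m^A$, obtaining $N_m^A = n - 0 + 0 = n$, which is independent of $A$. Next I would substitute into the formula for $N_k^A$ with $k<m$, where the entire expression is a sum over the empty index set, yielding $N_k^A = 0$ for every $k\in\{1,\ldots,m-1\}$. The same computation applied to $B$ (also nonsingular, so $0\notin\sigma(B)$) gives $N_m^B = n$ and $N_k^B = 0$ for $k<m$. Thus $N_k^A = N_k^B$ for all $k\in\{1,\ldots,m\}$, and Theorem \ref{sim} delivers the similarity of $\mathcal{K}_m(A)$ and $\mathcal{K}_m(B)$.

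There is essentially no obstacle here: the result is an immediate specialization of Theorem \ref{sim} once one notices that nonsingularity kills the zero-eigenvalue contribution and forces the entire Segre characteristic of $J_m(0)\otimes A$ to consist of exactly $n$ copies of $J_m(0)$. Conceptually, this reflects the fact that for nonsingular $A$, case (iii) of \cite[Theorem 4.3.17]{HJ2} applies to every eigenvalue, so $J_m(0)\otimes A$ is nilpotent with all Jordan blocks of the maximal size $m$, giving the common canonical form $\bigl(J_m(0)\otimes A$ is similar to $I_n\otimes J_m(0)\bigr)$ regardless of $A$. The only care needed in writing the proof is to handle the empty-sum convention cleanly and to note explicitly that the argument is symmetric in $A$ and $B$.
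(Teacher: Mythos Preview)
Your proposal is correct and follows exactly the same approach as the paper: compute $N_m^A=n$ and $N_k^A=0$ for $k<m$ from nonsingularity (via Proposition~\ref{FGcount}), do the same for $B$, and invoke Theorem~\ref{sim}. You supply a bit more justification for the vanishing sums than the paper's one-line proof, but the argument is otherwise identical.
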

\begin{proof}


Since $A,B$ are nonsingular, $N_m^A=n=N_m^B$ and $N_k^A=0=N_k^B$ for each $k\in\{1,\ldots,m-1\}$. The claim follows from Theorem \ref{sim}.    \end{proof}




Let $m\in \mathbb N$ with $m\geq 2$ and $A,B\in M_n  .$ By Proposition \ref{kron} and Specht's Theorem \cite[Theorem 2.2.6]{HJ1}, $\mathcal{K}_m(A)$ and $\mathcal{K}_m(B)$ are unitarily similar if and only if \[\mbox{tr}[p(J_m(0)\otimes A,(J_m(0)\otimes A)^*)]=\mbox{tr}[p(J_m(0)\otimes B,(J_m(0)\otimes B)^*)]\] for every word $p(s,t)$ in two noncommuting variables $s,t$. Let $p(s,t)=s^{m_1}t^{n_1}s^{m_2}t^{n_2}\cdots s^{m_k}t^{n_k}$ with $m_1,n_1,\ldots,m_k,n_k\in \mathbb{N}\cup\{0\}$. Properties of Kronecker products \cite[Section 4.2]{HJ2} imply that
\[\begin{array}{rcl}p(J_m(0)\otimes Z,(J_m(0)\otimes Z)^*)&=&\displaystyle\prod_{j=1}^k(J_m(0)\otimes Z)^{m_j}((J_m(0)\otimes Z)^*)^{n_j}\\
&=&\displaystyle\prod_{j=1}^k(J_m(0)^{m_j}\otimes Z^{m_j})((J_m(0)^*)^{n_j}\otimes (Z^* )^{n_j})\\
&=&\displaystyle\prod_{j=1}^kJ_m(0)^{m_j}(J_m(0)^*)^{n_j}\otimes \displaystyle\prod_{j=1}^k Z^{m_j}(Z^* )^{n_j}\\
&=&p(J_m(0),J_m(0)^*)\otimes p(Z,Z^*).\end{array}\]
for any square matrix $Z$. By the trace property \cite[Section 4.2, Problem 12]{HJ2}, it follows that 
\[\textup{tr}[p(J_m(0)\otimes Z,(J_m(0)\otimes Z)^*)]=\textup{tr}[p(J_m(0),J_m(0)^*)]\cdot \textup{tr}[p(Z,Z^*)]\]
Hence, it suffices to check the trace condition on words $p(s,t)$ for which $\textup{tr}[p(J_m(0),J_m(0)^*)]\neq 0$.
\begin{theorem}\label{usim}
Let $m,n\in \mathbb N$ where $m\geq 2$ and let $A,B\in M_n  $. Then $\mathcal{K}_m(A)$ and $\mathcal{K}_m(B)$ are unitarily similar if and only if $\textup{tr}[p(A,A^*)]=\textup{tr}[p(B,B^*)]$ for every word $p(s,t)$ in noncommuting variables $s,t$ for which $\textup{tr}[p(J_m(0),J_m(0)^*)]\neq 0$. 
\end{theorem}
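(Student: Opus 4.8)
The plan is to reduce everything to the trace factorization already recorded in the discussion preceding the statement, and then to read off both directions by a dichotomy on whether $\textup{tr}[p(J_m(0),J_m(0)^*)]$ vanishes. The two structural tools are Proposition \ref{kron}, which converts unitary similarity of $\mathcal{K}_m(A)$ and $\mathcal{K}_m(B)$ into unitary similarity of $J_m(0)\otimes A$ and $J_m(0)\otimes B$, and Specht's Theorem, which restates the latter as equality of all word-traces. The identity $\textup{tr}[p(J_m(0)\otimes Z,(J_m(0)\otimes Z)^*)]=\textup{tr}[p(J_m(0),J_m(0)^*)]\cdot\textup{tr}[p(Z,Z^*)]$ is the engine that isolates the $J_m(0)$-factor so it can be cancelled or ignored.

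First I would treat the forward direction. Assuming $\mathcal{K}_m(A)$ and $\mathcal{K}_m(B)$ are unitarily similar, Proposition \ref{kron} gives unitary similarity of $J_m(0)\otimes A$ and $J_m(0)\otimes B$, so Specht's Theorem yields $\textup{tr}[p(J_m(0)\otimes A,(J_m(0)\otimes A)^*)]=\textup{tr}[p(J_m(0)\otimes B,(J_m(0)\otimes B)^*)]$ for every word $p$. Substituting the factorization rewrites this as $\textup{tr}[p(J_m(0),J_m(0)^*)]\left(\textup{tr}[p(A,A^*)]-\textup{tr}[p(B,B^*)]\right)=0$ for every word $p$. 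For any word $p$ with $\textup{tr}[p(J_m(0),J_m(0)^*)]\neq 0$, dividing by this nonzero scalar gives exactly $\textup{tr}[p(A,A^*)]=\textup{tr}[p(B,B^*)]$, as required.

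For the converse I would verify the Specht condition for $J_m(0)\otimes A$ and $J_m(0)\otimes B$ word by word. Given the hypothesis, fix an arbitrary word $p$. If $\textup{tr}[p(J_m(0),J_m(0)^*)]=0$, the factorization forces both $\textup{tr}[p(J_m(0)\otimes A,(J_m(0)\otimes A)^*)]$ and $\textup{tr}[p(J_m(0)\otimes B,(J_m(0)\otimes B)^*)]$ to vanish, so they agree trivially; if $\textup{tr}[p(J_m(0),J_m(0)^*)]\neq 0$, the hypothesis supplies $\textup{tr}[p(A,A^*)]=\textup{tr}[p(B,B^*)]$, and multiplying both sides by the common scalar $\textup{tr}[p(J_m(0),J_m(0)^*)]$ again makes the two Kronecker-traces coincide. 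Hence equality holds for all words, Specht's Theorem gives unitary similarity of $J_m(0)\otimes A$ and $J_m(0)\otimes B$, and Proposition \ref{kron} returns unitary similarity of $\mathcal{K}_m(A)$ and $\mathcal{K}_m(B)$.

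Since the genuinely computational content — the Kronecker-product factorization of each word and the multiplicativity of the trace — has already been dispatched before the statement, I do not expect a substantive obstacle. The only point demanding care is that words annihilated by $J_m(0)$, namely those with $\textup{tr}[p(J_m(0),J_m(0)^*)]=0$, impose no constraint whatsoever and so must be excluded from the characterization; this is precisely why the restriction $\textup{tr}[p(J_m(0),J_m(0)^*)]\neq 0$ appears in the theorem and why the converse needs the explicit dichotomy rather than a single cancellation.
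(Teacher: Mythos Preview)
Your proof is correct and follows exactly the paper's approach: the paper places the entire argument in the discussion immediately preceding the theorem, invoking Proposition~\ref{kron}, Specht's Theorem, and the Kronecker trace factorization, then concluding that only words with $\textup{tr}[p(J_m(0),J_m(0)^*)]\neq 0$ matter. Your write-up simply makes the dichotomy in the converse direction more explicit than the paper's one-line ``it suffices to check,'' but the substance is identical.
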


\begin{corollary}
    Let $m,n\in \mathbb N$ where $m\geq 2$ and let $A,B\in M_n$ be normal. Let $\lambda_j(A)$ and $\lambda_j(B)$ denote the eigenvalues of $A$ and $B$, respectively. Then $\mathcal{K}_m(A)$ and $\mathcal{K}_m(B)$ are unitarily similar if and only if $\displaystyle\sum_{j=1}^np(\lambda_j(A),\overline{\lambda_j(A)})=\displaystyle\sum_{j=1}^np(\lambda_j(B),\overline{\lambda_j(B)})$ for every word $p(s,t)$ in noncommuting variables $s,t$ for which $\textup{tr}[p(J_m(0),J_m(0)^*)]\neq 0$. In particular, if $a,b\in \mathbb C$, then $\mathcal{K}_m(a)$ and $\mathcal{K}_m(b)$ are unitarily similar if and only if $|a|=|b|$.
\end{corollary}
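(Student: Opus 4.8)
The plan is to invoke Theorem~\ref{usim} and translate its trace conditions into statements about eigenvalues using the spectral theorem. By Theorem~\ref{usim}, $\mathcal{K}_m(A)$ and $\mathcal{K}_m(B)$ are unitarily similar precisely when $\textup{tr}[p(A,A^*)]=\textup{tr}[p(B,B^*)]$ for every word $p(s,t)$ with $\textup{tr}[p(J_m(0),J_m(0)^*)]\neq 0$, so the only work is to evaluate $\textup{tr}[p(A,A^*)]$ when $A$ is normal.

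First I would write $A=U\Lambda U^*$ with $U$ unitary and $\Lambda=\textup{diag}(\lambda_1(A),\ldots,\lambda_n(A))$, so that $A^*=U\overline{\Lambda}U^*$. Since conjugation by $U$ is a $*$-homomorphism, every word satisfies $p(A,A^*)=U\,p(\Lambda,\overline{\Lambda})\,U^*$. The matrices $\Lambda$ and $\overline{\Lambda}$ are diagonal, hence commute, so $p(\Lambda,\overline{\Lambda})$ is the diagonal matrix whose $j$-th entry is the scalar $p(\lambda_j(A),\overline{\lambda_j(A)})$. Taking traces and using unitary invariance gives $\textup{tr}[p(A,A^*)]=\sum_{j=1}^n p(\lambda_j(A),\overline{\lambda_j(A)})$, and the same holds for $B$. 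Substituting these two expressions into the criterion of Theorem~\ref{usim} yields the first equivalence at once.

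For the scalar claim I specialize to $n=1$ with $A=[a]$: every $1$-by-$1$ matrix is normal and its sole eigenvalue is $a$, so the condition becomes $p(a,\overline{a})=p(b,\overline{b})$ for every word $p$ with $\textup{tr}[p(J_m(0),J_m(0)^*)]\neq 0$. Here I would characterize the relevant words: writing $p(s,t)=s^{m_1}t^{n_1}\cdots s^{m_k}t^{n_k}$ and grading $J_m(0)$ with degree $+1$ and $J_m(0)^*$ with degree $-1$, the product $p(J_m(0),J_m(0)^*)$ shifts basis indices by the net degree $\sum_j m_j-\sum_j n_j$; its diagonal, hence its trace, can be nonzero only when this net degree vanishes, i.e. $\sum_j m_j=\sum_j n_j=:d$. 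For such balanced words the scalar value is $p(a,\overline{a})=a^{d}\,\overline{a}^{d}=|a|^{2d}$, so every admissible trace condition reduces to $|a|^{2d}=|b|^{2d}$.

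Both directions then drop out. Taking the single word $p(s,t)=st$, for which $p(J_m(0),J_m(0)^*)=J_m(0)J_m(0)^*$ has trace $m-1\neq 0$ since $m\geq 2$, forces $|a|^2=|b|^2$, i.e. $|a|=|b|$; conversely $|a|=|b|$ makes $|a|^{2d}=|b|^{2d}$ hold for every $d$, so all admissible trace conditions are met and $\mathcal{K}_m(a)$ and $\mathcal{K}_m(b)$ are unitarily similar. I expect the only point requiring genuine care to be the degree-balancing characterization of the nonzero-trace words, together with the observation that the single word $st$ already isolates $|a|$; the rest is a direct application of Theorem~\ref{usim} and the spectral theorem.
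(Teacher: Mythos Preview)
Your proof is correct and follows essentially the same route as the paper: both use the spectral theorem to rewrite $\textup{tr}[p(A,A^*)]$ as $\sum_j p(\lambda_j(A),\overline{\lambda_j(A)})$ and then invoke Theorem~\ref{usim}, and both use the word $p(s,t)=st$ with $\textup{tr}[J_m(0)J_m(0)^*]=m-1$ to extract $|a|=|b|$ in the forward direction of the scalar claim. The one noteworthy difference is in the converse of the scalar claim: the paper simply cites \cite[Proposition~2.1(a)]{hG13} for the fact that $|a|=|b|$ implies $\mathcal{K}_m(a)$ and $\mathcal{K}_m(b)$ are unitarily similar, whereas you give a self-contained argument via the degree-balancing observation that any word with $\textup{tr}[p(J_m(0),J_m(0)^*)]\neq 0$ must satisfy $\sum m_j=\sum n_j=d$, so that $p(a,\overline{a})=|a|^{2d}$ and the trace conditions of Theorem~\ref{usim} are automatically met when $|a|=|b|$. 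Your route is slightly more work but keeps the corollary independent of the external reference.
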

\begin{proof}
By the Spectral Theorem, $A=UDU^*$ where $D=\textup{diag}(\lambda_1(A),\ldots,\lambda_n(A))$ and $U\in M_n$ is unitary. Let $p(s,t)$ be a word in noncommuting variables $s,t$. Then $p(A,A^*)=Up(D,D^*)U^*$. Hence, $\mbox{tr}[p(A,A^*)]=\mbox{tr}[p(D,D^*)]=\displaystyle\sum_{j=1}^np(\lambda_j(A),\overline{\lambda_j(A)})$. Similarly, $\mbox{tr}[p(B,B^*)]=\displaystyle\sum_{j=1}^np(\lambda_j(B),\overline{\lambda_j(B)}).$ The equivalence is established by applying Theorem \ref{usim}.

For the remaining claim, if $|a|=|b|$, then $\mathcal{K}_m(a) $ and $\mathcal{K}_m(b)$ are unitarily similar due to \cite[Proposition 2.1(a)]{hG13}. Conversely, assume $\mathcal{K}_m(a)$ and $\mathcal{K}_m(b)$ are unitarily similar. Note that for the word $p(s,t)=st$, $\mbox{tr}[p(J_m(0),J_m(0)^*)]=m-1\neq0$. By Theorem \ref{usim}, $|a|^2=a\overline{a}=p(a,\overline{a})=p(b,\overline{b})=b\overline{b}=|b|^2$, and so $|a|=|b|.$
\end{proof}



\subsection{Zero-dilation index of $\mathcal{K}_m(A)$}

We first compute the zero-dilation index of $\mathcal{K}_2(A)$.

\begin{theorem}\label{thm:zdikms2by2}
Let $n\in \mathbb N$ and let $A\in M_n  $. Then $d(\mathcal{K}_2(A))=n+\textup{nullity}(A).$
\end{theorem}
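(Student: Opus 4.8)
The plan is to apply the spectral characterization $d(A)=\min\{i_{\geq 0}(\textup{Re}(e^{i\theta}A)):\theta\in\mathbb R\}$ recalled in the introduction, and to show that the count $i_{\geq 0}(\textup{Re}(\omega\mathcal{K}_2(A)))$ is in fact \emph{independent} of $\omega\in\mathbb T$, so that the minimum is attained trivially. First I would write out, for any $\omega\in\mathbb T$,
\[2\,\textup{Re}(\omega\mathcal{K}_2(A))=\begin{bmatrix}0&\omega A\\ \overline{\omega}A^*&0\end{bmatrix},\]
a block anti-diagonal Hermitian matrix whose off-diagonal block is $\omega A$.

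Next I would identify its spectrum. Using a singular value decomposition $\omega A=U\Sigma V^*$, conjugating by the unitary $U\oplus V$ converts the matrix into $\begin{bmatrix}0&\Sigma\\ \Sigma&0\end{bmatrix}$, which is a direct sum of the $2$-by-$2$ blocks $\begin{bmatrix}0&\sigma_i\\ \sigma_i&0\end{bmatrix}$. Hence the eigenvalues of $2\,\textup{Re}(\omega\mathcal{K}_2(A))$ are exactly $\pm\sigma_i(\omega A)=\pm\sigma_i(A)$, where the last equality uses $|\omega|=1$. In particular the spectrum does not depend on $\omega$, which is the crux of the argument.

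Finally I would do the bookkeeping. Writing $r=\textup{rank}(A)$, exactly $r$ of the singular values $\sigma_i(A)$ are positive while $n-r$ of them vanish; each positive $\sigma_i$ contributes one positive and one negative eigenvalue, and each zero $\sigma_i$ contributes two zero eigenvalues. Thus $i_{\geq 0}(\textup{Re}(\omega\mathcal{K}_2(A)))=r+2(n-r)=2n-r=n+\textup{nullity}(A)$ for every $\omega\in\mathbb T$. Taking the minimum over $\omega$ and invoking the spectral characterization yields $d(\mathcal{K}_2(A))=n+\textup{nullity}(A)$.

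Since every step is an elementary spectral computation, I do not anticipate a genuine obstacle; the one conceptual point worth flagging is the $\omega$-invariance, which here comes for free because unimodular scaling preserves singular values (equivalently, $\mathcal{K}_2(\omega A)=\omega\mathcal{K}_2(A)$ and $\omega A$ shares the singular values of $A$, matching the $m=2$ unitary-similarity criterion established earlier in this section).
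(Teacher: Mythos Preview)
Your proposal is correct and follows essentially the same route as the paper's proof: both invoke the characterization $d(A)=\min_\theta i_{\geq 0}(\textup{Re}(e^{i\theta}A))$, reduce $\textup{Re}(\omega\mathcal{K}_2(A))$ via the singular value decomposition of $A$ to the direct sum $\bigoplus_i \textup{Re}(\sigma_i J_2(0))$ (the paper's phrasing) with eigenvalues $\pm\sigma_i/2$, and read off the $\omega$-independent count $i_{\geq 0}=n+\textup{nullity}(A)$. The only cosmetic difference is that you spell out the conjugation by $U\oplus V$ and the bookkeeping explicitly, whereas the paper states the unitary similarity in one line.
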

\begin{proof}
Let $\sigma_1\geq \cdots\geq \sigma_n\geq 0$ be the singular values of $A$. For any $\theta\in \mathbb  R,$ $\textup{Re}(e^{i\theta}\mathcal{K}_2(A) )$ is unitarily similar to $\bigoplus_{i=1}^n\textup{Re}(\sigma_i J_2(0))$. Since $\sigma_1,\ldots,\sigma_n\geq 0$, $i_{\geq 0}(\textup{Re}(e^{i\theta}\mathcal{K}_2(A) ))=i_{\geq 0}(\bigoplus_{i=1}^n\textup{Re}(\sigma_i J_2(0)))=n+i_0(A).$ The claim follows due to \cite[Theorem 2.2]{hG14b}.
\end{proof}

For $m\geq 3$, the computation of the zero-dilation index of $\mathcal{K}_m(A)$ requires a series of observations.

\begin{lemma}\label{*cong}
Let $m,n\in \mathbb N$ where $m\geq 2$ and let $A\in M_n  $. There exists unit upper triangular $S\in M_{mn}  $ such that
 $S[\alpha \mathcal{K}_m(A)+\beta \mathcal{K}_m(A)^*]S^*=\alpha J_m(0)\otimes A+\beta [J_m(0)\otimes A]^*-(\alpha+\beta) (I_{m-1}\oplus [0])\otimes AA^*$ for any $\alpha,\beta\in \mathbb C.$ In addition, if $\beta=\alpha^{-1}$ ($\beta=\overline{\alpha}\in \mathbb{T}$), then there exists nonsingular (unitary) $T\in M_{mn}   $ such that $TS[\alpha \mathcal{K}_m(A)+\beta \mathcal{K}_m(A)^*]S^*T^{-1}= 2\textup{Re}(J_m(0)\otimes A)-(\alpha+\beta) (I_{m-1}\oplus [0])\otimes AA^*$.  
 \end{lemma}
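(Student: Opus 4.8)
**The plan for Lemma \ref{*cong}.**

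The plan is to exhibit an explicit \emph{block} upper triangular matrix $S$ that performs the required $*$-congruence, guided by the algebraic identity in Proposition \ref{S}. Observe that $\mathcal{K}_m(A)=(I_{mn}-J_m(0)\otimes A)^{-1}-I_{mn}$, so writing $N:=J_m(0)\otimes A$ we have $I_{mn}+\mathcal{K}_m(A)=(I_{mn}-N)^{-1}$. This suggests taking $S:=I_{mn}-N=I_{mn}-J_m(0)\otimes A$, which is unit upper triangular (its diagonal blocks are $I_n$). The first step is therefore to compute $S[\alpha\mathcal{K}_m(A)+\beta\mathcal{K}_m(A)^*]S^*$ with this choice and verify it equals the claimed right-hand side.

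The core computation splits by linearity into the $\alpha$-term and the $\beta$-term. For the $\alpha$-term, since $S=I_{mn}-N$ and $\mathcal{K}_m(A)=(I_{mn}-N)^{-1}-I_{mn}=S^{-1}-I_{mn}$, I compute $S\mathcal{K}_m(A)=S(S^{-1}-I_{mn})=I_{mn}-S=N$; hence $S\mathcal{K}_m(A)S^*=NS^*=N(I_{mn}-N^*)=N-NN^*$. A symmetric calculation for the $\beta$-term gives $S\mathcal{K}_m(A)^*S^*=(S\mathcal{K}_m(A)S^*)^*=N^*-NN^*$ (using $NN^*=(NN^*)^*$). Adding, the $*$-congruence produces $\alpha(N-NN^*)+\beta(N^*-NN^*)=\alpha N+\beta N^*-(\alpha+\beta)NN^*$. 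The final step is to identify $NN^*$: since $N=J_m(0)\otimes A$, we get $NN^*=(J_m(0)J_m(0)^*)\otimes AA^*=(I_{m-1}\oplus[0])\otimes AA^*$, because $J_m(0)J_m(0)^*=\textup{diag}(1,\ldots,1,0)=I_{m-1}\oplus[0]$. This yields exactly $\alpha J_m(0)\otimes A+\beta[J_m(0)\otimes A]^*-(\alpha+\beta)(I_{m-1}\oplus[0])\otimes AA^*$, establishing the first assertion.

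For the second assertion, set $\beta=\alpha^{-1}$ (or $\beta=\overline{\alpha}\in\mathbb{T}$), so the first two terms become $\alpha N+\alpha^{-1}N^*$. The goal is to find $T$ conjugating this into $2\textup{Re}(N)=N+N^*$ while preserving the $NN^*$ term. The natural choice is a diagonal scaling $T:=D\otimes I_n$ where $D=\textup{diag}(d_1,\ldots,d_m)$ is chosen so that $D J_m(0)D^{-1}$ scales the single off-diagonal $J_m(0)$ by $\alpha^{-1}$; concretely $d_k=\alpha^{k}$ works since $J_m(0)$ is the nilpotent shift and conjugation multiplies its nonzero entries by $d_k/d_{k+1}=\alpha^{-1}$. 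Then $T(\alpha N)T^{-1}=\alpha\cdot\alpha^{-1}(J_m(0)\otimes A)\cdot(\text{adjust})=N$ after matching, and correspondingly $T(\alpha^{-1}N^*)T^{-1}=N^*$; I must verify the diagonal block $NN^*$ is fixed by $T$ because it is block-diagonal and $T$ is a block-diagonal scaling (so conjugation leaves $(I_{m-1}\oplus[0])\otimes AA^*$ invariant). When $\alpha\in\mathbb{T}$ and $\beta=\overline{\alpha}$, the scalars $d_k=\alpha^k$ have modulus one, so $T$ is unitary; in the general $\beta=\alpha^{-1}$ case $T$ is merely nonsingular. The main obstacle is bookkeeping the exponents in the diagonal scaling so that the single shift $J_m(0)$ is rescaled by precisely $\alpha^{-1}$ (turning $\alpha N\mapsto N$ and $\alpha^{-1}N^*\mapsto N^*$ simultaneously) while confirming the Hermitian perturbation term is genuinely invariant under $T$; both reduce to elementary Kronecker-product identities once the correct $D$ is pinned down.
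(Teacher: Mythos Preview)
Your proposal is correct and follows essentially the same approach as the paper: both take $S=I_{mn}-J_m(0)\otimes A$ (using Proposition~\ref{S} to get $S\mathcal{K}_m(A)=I_{mn}-S=N$) and then $T=\textup{diag}(1,\alpha,\ldots,\alpha^{m-1})\otimes I_n$, which differs from your $d_k=\alpha^k$ only by an immaterial scalar. Your write-up is in fact more explicit than the paper's, spelling out $S\mathcal{K}_m(A)S^*=N-NN^*$, the identification $NN^*=(I_{m-1}\oplus[0])\otimes AA^*$, and the verification that the block-diagonal perturbation is fixed by $T$.
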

\begin{proof}
Set $S:=I_{mn}-J_r(0)\otimes A$. Proposition \ref{S} implies the identity $S\mathcal{K}_m(A)=I_{mn}-S$. Using this, we have $S(\alpha \mathcal{K}_m(A)+\beta\mathcal{K}_m(A)^*)S^*=\alpha J_m(0)\otimes A+\beta [J_m(0)\otimes A]^*-(\alpha+\beta) (I_{m-1}\oplus [0])\otimes AA^*$. The remaining claim follows by taking $T=\textup{diag}(1,\alpha,\ldots,\alpha^{m-1})\otimes I_n$.
\end{proof}

\begin{lemma}\label{inc}
Let $m,n\in \mathbb N$ where $m\geq 2$ and let $A\in M_n  $. For $k\in\{1,\ldots,m\}$ and for any $\theta\in \mathbb R$, define
\[X_k^A(\theta):=\begin{cases}2\textup{Re}(J_m(0)\otimes A)-2\cos\theta (I_{m-1}\oplus[0])\otimes AA^*,&\ \textup{if}\ k=m\\ 
2\textup{Re}(J_k(0)\otimes A)-2\cos\theta (I_k\otimes AA^*),&\ \textup{if}\ k<m.\end{cases}\]
If $0\leq \theta_1\leq \theta_2\leq \pi$, then $i_{\geq 0}(X_k^A(\theta_1))\leq i_{\geq 0}(X_k^A(\theta_2)).$ If $\pi\leq \theta_1\leq \theta_2\leq 2\pi$, then $i_{\geq 0}(X_k^A(\theta_1))\geq i_{\geq 0}(X_k^A(\theta_2))$.
\end{lemma}
\begin{proof}
For each $k\in\{1,\ldots,m\}$, let \[D_k:=\begin{cases} I_{m-1}\oplus [0],& \textup{if}\ k=m\\ I_k,& \textup{if}\ k<m.\end{cases}\] For any $\theta_1,\theta_2\in \mathbb R$, observe that
\[X_k^A(\theta_2)-X_k^A(\theta_1)=2(\cos\theta_1-\cos\theta_2)(D_k\otimes AA^*).\]
Since $\cos\theta$ is decreasing on $[0,\pi]$, $X_k^A(\theta_2)-X_k^A(\theta_1)\geq 0$ for any $0\leq \theta_1\leq \theta_2\leq \pi.$ The first claim follows by applying Weyl's inequality (see \cite[Theorem 4.3.1]{HJ1} or \cite[Problem 4.4.P4]{HJ1}). Similarly, the second claim follows from Weyl's inequality and so $X_k^A(\theta_2)-X_k^A(\theta_1)\leq 0$ for any $\pi\leq \theta_1\leq \theta_2\leq 2\pi$ since $\cos\theta $ is increasing on $[\pi,2\pi].$
\end{proof}

\begin{theorem}\label{thm:zdikms}
Let $m,n\in \mathbb N$ where $m\geq 3$ and let $A\in M_n  $. Then $d(\mathcal{K}_m(A))=i_{\geq 0}(\textup{Re}(\mathcal{K}_m(A)))=i_{\geq 0}(2\textup{Re}(J_m(0)\otimes A)- 2(I_{m-1}\oplus[0])\otimes AA^*).$ If $A$ is nonsingular, then \[d(\mathcal{K}_m(A))=
 n+i_{\geq 0}(2\textup{Re}(J_{m-2}(0)\otimes A)- 2(I_{m-2}\otimes AA^*)).\]
\end{theorem}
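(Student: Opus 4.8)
The plan is to combine the variational formula $d(\mathcal{K}_m(A)) = \min_{\theta\in[0,2\pi)} i_{\geq 0}(\textup{Re}(e^{i\theta}\mathcal{K}_m(A)))$ from \cite[Theorem 2.2]{hG14b} with the congruence reductions in Lemma \ref{*cong} and the monotonicity in Lemma \ref{inc}. First I would apply Lemma \ref{*cong} with $\alpha=e^{i\theta}$ and $\beta=e^{-i\theta}=\overline{\alpha}\in\mathbb T$: since $2\textup{Re}(e^{i\theta}\mathcal{K}_m(A))=e^{i\theta}\mathcal{K}_m(A)+e^{-i\theta}\mathcal{K}_m(A)^*=\alpha\mathcal{K}_m(A)+\beta\mathcal{K}_m(A)^*$, the lemma supplies a nonsingular matrix $TS$ (with $T$ unitary) for which $(TS)[2\textup{Re}(e^{i\theta}\mathcal{K}_m(A))](TS)^*=2\textup{Re}(J_m(0)\otimes A)-2\cos\theta\,(I_{m-1}\oplus[0])\otimes AA^*=X_m^A(\theta)$ in the notation of Lemma \ref{inc}. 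As $TS$ is nonsingular, Sylvester's law of inertia yields $i_{\geq 0}(\textup{Re}(e^{i\theta}\mathcal{K}_m(A)))=i_{\geq 0}(X_m^A(\theta))$ for every $\theta$; specializing to $\theta=0$ (where $T=I$ and $S$ alone suffices) gives $i_{\geq 0}(\textup{Re}(\mathcal{K}_m(A)))=i_{\geq 0}(X_m^A(0))$.

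Next I would invoke Lemma \ref{inc}: the map $\theta\mapsto i_{\geq 0}(X_m^A(\theta))$ is nondecreasing on $[0,\pi]$ and nonincreasing on $[\pi,2\pi]$, and since $\cos 0=\cos 2\pi$ we have $X_m^A(0)=X_m^A(2\pi)$, so the minimum over $[0,2\pi)$ is attained at $\theta=0$. Together with the previous step this establishes the first chain of equalities $d(\mathcal{K}_m(A))=i_{\geq 0}(\textup{Re}(\mathcal{K}_m(A)))=i_{\geq 0}(X_m^A(0))$, and $X_m^A(0)$ is exactly $2\textup{Re}(J_m(0)\otimes A)-2(I_{m-1}\oplus[0])\otimes AA^*$.

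For the nonsingular case, the plan is to evaluate $i_{\geq 0}(X_m^A(0))$ by Haynsworth's inertia additivity formula \cite[Problem 4.5.P21]{HJ1}. I would partition $X_m^A(0)$ into its leading $(m-2)n\times(m-2)n$ principal block, which is precisely $X_{m-2}^A(0)=2\textup{Re}(J_{m-2}(0)\otimes A)-2\,I_{m-2}\otimes AA^*$, and its trailing $2n\times 2n$ principal block $M=\begin{bmatrix}-2AA^* & A\\ A^* & 0\end{bmatrix}$. Since $A$ is nonsingular, $M$ is nonsingular with $\textup{In}(M)=(n,n,0)$: applying Haynsworth to $M$ itself, the Schur complement of the negative definite $(1,1)$-block $-2AA^*$ is $\tfrac12 A^*(AA^*)^{-1}A=\tfrac12 I_n>0$, so $i_{\geq 0}(M)=n$. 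The only coupling between the two principal blocks is a single $A$-block in position $(m-2,m-1)$, so the Schur complement of $M$ equals $X_{m-2}^A(0)-A\,(M^{-1})_{11}\,A^*$; a short computation gives $M^{-1}=\begin{bmatrix}0 & (A^*)^{-1}\\ A^{-1} & 2I_n\end{bmatrix}$, whence $(M^{-1})_{11}=0$ and the Schur complement is exactly $X_{m-2}^A(0)$. Haynsworth additivity then gives $i_{\geq 0}(X_m^A(0))=i_{\geq 0}(M)+i_{\geq 0}(X_{m-2}^A(0))=n+i_{\geq 0}(X_{m-2}^A(0))$, which is the stated formula.

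The main obstacle I anticipate is the final Haynsworth computation: one must correctly isolate the trailing $2n\times 2n$ pivot $M$, confirm it is nonsingular with inertia $(n,n,0)$, and—crucially—observe that the $(1,1)$-block of $M^{-1}$ vanishes, so that the rank-one block correction $A\,(M^{-1})_{11}\,A^*$ disappears and the Schur complement reduces cleanly to $X_{m-2}^A(0)$. This vanishing is exactly what closes the recursion to the claimed value; the first two paragraphs are essentially bookkeeping resting on Lemmas \ref{*cong} and \ref{inc}.
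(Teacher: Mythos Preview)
Your proposal is correct and follows essentially the same route as the paper: the paper likewise reduces to $i_{\geq 0}(X_m^A(\theta))$ via Lemma~\ref{*cong} and Sylvester, uses Lemma~\ref{inc} to locate the minimum at $\theta=0$, and then applies Haynsworth with the trailing $2n\times 2n$ pivot $\begin{bmatrix}-2AA^*&A\\A^*&0\end{bmatrix}$ to obtain $i_{\geq 0}(X_m^A(0))=n+i_{\geq 0}(X_{m-2}^A(0))$. The only cosmetic difference is that the paper reads off the inertia of this pivot via the congruence $(A\oplus I_n)\bigl(\begin{smallmatrix}-2&1\\1&0\end{smallmatrix}\otimes I_n\bigr)(A\oplus I_n)^*$ rather than a second Haynsworth step, and simply asserts that the Schur complement equals $X_{m-2}^A(0)$ where you verify $(M^{-1})_{11}=0$ explicitly.
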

\begin{proof}
By \cite[Theorem 2.2]{hG14b}, $d(\mathcal{K}_m(A))=\min\{i_{\geq 0}(\textup{Re}(e^{i\theta}\mathcal{K}_m(A))):\theta \in \mathbb R\}$. By Lemma \ref{*cong} and Sylvester's law of inertia \cite[Theorem 4.5.8]{HJ1}, \[i_\geq (\textup{Re}(e^{i\theta}\mathcal{K}_m(A)))=i_{\geq 0}(X_m^A(\theta)),\ \textup{for any}\ \theta\in \mathbb R.\]
Hence, the monotonicity from Lemma \ref{inc} implies \[\begin{array}{rcl}d(\mathcal{K}_m(A))&=&\min\{i_{\geq 0}(\textup{Re}(e^{i\theta}\mathcal{K}_m(A))):\theta \in \mathbb R\}\\
&=&\min\{i_{\geq 0}(X_m^A(\theta )):\theta \in [0,\pi]\cup[\pi,2\pi]\}\\
&=&\min\{i_{\geq 0}(X_m^A(0)),i_{\geq 0}(X_m^A(2\pi))\}\\
&=&i_{\geq 0}(\textup{Re}(\mathcal{K}_m(A))).\end{array}\]

To prove the remaining claim, assume $A$ is nonsingular. Consider $B:=X_2^A(0)=\begin{bmatrix} -2 AA^* & A\\ A^*& 0\end{bmatrix}$ and $C:=X_2^{[1]}(0)=\begin{bmatrix} -2 & 1\\ 1& 0\end{bmatrix}$ (which are both nonsingular). Note that $B=(A\oplus I_n)\left(C\otimes I_n\right)(A\oplus I_n)^*.$ By Sylvester's law of inertia and the eigenvalues of $C$, $i_+(B)=n\cdot i_+\left(C\right)=n$. Moreover, $B$ is a nonsingular principal submatrix of $X_m^A(0)$. Direct computations reveal that the Schur complement of $B$ in $X_m^A(0)$ is equal to $X_{m-2}^A(0)$. The claim follows from Haynsworth's Theorem (see \cite[Problem 4.5.P21]{HJ1}): \[i_{\geq 0}(\textup{Re}(\mathcal{K}_m(A)))=i_+(B)+i_{\geq 0}(X_{m-2}^A(0))=n+i_{\geq 0}(X_{m-2}^A(0)).\] \end{proof}

In general, $i_{\geq 0}(\textup{Re}(\mathcal{K}_m(A)))$ or $i_{\geq 0}(X_{m-2}^A(0))$ might still be difficult to compute directly. In the next result, adding normality to the assumptions on $A$ yields a formula for $d(\mathcal{K}_m(A))$.

\begin{corollary}\label{cor:zdikms}
Let $m,n\in \mathbb N$ where $m\geq 3$ and let $A\in M_n  $ be nonsingular and normal. Let $\lambda_1,\ldots,\lambda_n$ be the eigenvalues of $A$. For each $i\in\{1,\ldots,n\}$, define $k_i$ to be the largest element of $\{1,\ldots, m-2\}$ for which $\cos\left(\frac{k_i\pi }{m-1}\right)<|\lambda_i|\leq \cos\left[\frac{(k_i-1)\pi }{m-1}\right]$; if no such element of $\{1,\ldots,m-2\}$ exists, define $k_i:=1$. Then $d(\mathcal{K}_m(A))=k_1+\ldots+k_n.$
\end{corollary}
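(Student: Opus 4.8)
The plan is to reduce the computation of $d(\mathcal{K}_m(A))$ via Theorem \ref{thm:zdikms} to counting nonnegative eigenvalues of a block matrix, and then exploit normality to diagonalize $A$ and decouple the problem into $n$ scalar problems. Since $A$ is nonsingular and normal, Theorem \ref{thm:zdikms} gives
\[d(\mathcal{K}_m(A))=n+i_{\geq 0}\!\left(2\textup{Re}(J_{m-2}(0)\otimes A)-2(I_{m-2}\otimes AA^*)\right).\]
By the Spectral Theorem write $A=U\,\textup{diag}(\lambda_1,\ldots,\lambda_n)\,U^*$ with $U$ unitary; conjugating the argument by $I_{m-2}\otimes U$ shows that the big Hermitian matrix is unitarily similar to a direct sum of the $n$ scalar Hermitian matrices
\[Z_i:=2\textup{Re}(\lambda_i J_{m-2}(0))-2|\lambda_i|^2 I_{m-2}\in M_{m-2},\]
so that $i_{\geq 0}$ of the whole equals $\sum_{i=1}^n i_{\geq 0}(Z_i)$. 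Thus it remains to show that $i_{\geq 0}(Z_i)=k_i-1$ for each $i$, which would give $d(\mathcal{K}_m(A))=n+\sum_{i=1}^n(k_i-1)=\sum_{i=1}^n k_i$.

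First I would simplify $Z_i$ by a phase rotation. Since $\textup{Re}(\lambda_i J_{m-2}(0))$ only involves $\lambda_i$ on the superdiagonal and $\overline{\lambda_i}$ on the subdiagonal, conjugating by a diagonal unitary $\textup{diag}(1,e^{i\phi},e^{2i\phi},\ldots)$ with $e^{i\phi}=\lambda_i/|\lambda_i|$ replaces $\lambda_i$ by $|\lambda_i|$ throughout, so $Z_i$ is unitarily similar to $2|\lambda_i|\,\textup{Re}(J_{m-2}(0))-2|\lambda_i|^2 I_{m-2}$. Dividing by the positive scalar $2|\lambda_i|$ (which preserves inertia) reduces the question to counting the eigenvalues of $\textup{Re}(J_{m-2}(0))$ that are $\geq |\lambda_i|$. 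The eigenvalues of $\textup{Re}(J_{m-2}(0))=\tfrac12(J_{m-2}(0)+J_{m-2}(0)^*)$ — the tridiagonal matrix with zero diagonal and $\tfrac12$ on the off-diagonals — are the classical values $\cos\!\left(\frac{j\pi}{m-1}\right)$ for $j=1,\ldots,m-2$.

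The main technical step is therefore to count how many of the values $\cos\!\left(\frac{j\pi}{m-1}\right)$, $j=1,\ldots,m-2$, satisfy $\cos\!\left(\frac{j\pi}{m-1}\right)\geq|\lambda_i|$, i.e.\ $\cos\!\left(\frac{j\pi}{m-1}\right)-|\lambda_i|\geq 0$; here I would need to be slightly careful about whether equality contributes to $i_{\geq 0}$, which is exactly where the half-open interval in the definition of $k_i$ enters. Because $\cos$ is strictly decreasing in $j$ on this range, the indices $j$ with $\cos\!\left(\frac{j\pi}{m-1}\right)\geq|\lambda_i|$ form an initial segment $\{1,\ldots,k_i-1\}$ precisely when $k_i$ is the largest index with $\cos\!\left(\frac{k_i\pi}{m-1}\right)<|\lambda_i|\leq\cos\!\left[\frac{(k_i-1)\pi}{m-1}\right]$, matching the definition in the statement (with the edge case $k_i=1$ handled when $|\lambda_i|$ exceeds $\cos\!\left(\frac{\pi}{m-1}\right)$, giving no nonnegative eigenvalues). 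This yields $i_{\geq 0}(Z_i)=k_i-1$ and completes the proof.

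The step I expect to require the most care is the bookkeeping of inequalities in the last paragraph: reconciling the $\geq 0$ convention in $i_{\geq 0}$ with the strict-versus-nonstrict boundaries in the definition of $k_i$, and confirming that the shifted eigenvalues $\cos\!\left(\frac{j\pi}{m-1}\right)-|\lambda_i|$ land correctly relative to zero so that exactly $k_i-1$ of them are nonnegative. Everything else — the reduction via Theorem \ref{thm:zdikms}, the spectral decomposition, the phase rotation, and invoking the known spectrum of $\textup{Re}(J_{m-2}(0))$ — is routine once the scalar count is pinned down.
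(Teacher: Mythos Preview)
Your proposal is correct and follows essentially the same route as the paper's proof: invoke Theorem \ref{thm:zdikms}, use the Spectral Theorem to decouple the Hermitian matrix into the $n$ scalar summands $Z_i$, identify their eigenvalues via the known spectrum $\cos\!\left(\frac{j\pi}{m-1}\right)$ of $\textup{Re}(J_{m-2}(0))$, and count to obtain $i_{\geq 0}(Z_i)=k_i-1$. The only cosmetic difference is that you make the phase-rotation step and the boundary bookkeeping explicit, whereas the paper simply cites the tridiagonal eigenvalue formula and the definition of $k_i$ directly.
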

\begin{proof}
Spectral Theorem guarantees that $2\textup{Re}(J_{m-2}(0)\otimes A)- 2(I_{m-2}\otimes AA^*)$ is unitarily similar to $\bigoplus_{i=1}^n[2\textup{Re}(\lambda_iJ_{m-2}(0))- 2|\lambda_i|^2I_{m-2}].$
The eigenvalues of each direct summand are $-2|\lambda_i|^2+|\lambda_i|\cos \frac{k\pi}{m-1}$ where $k\in \{1,\ldots,m-2\}$ (this is because the direct summands are tridiagonal \cite[Problem 1.4.P16]{HJ1}). Note that $-2|\lambda_i|^2+|\lambda_i|\cos \frac{k\pi}{m-1}\geq 0$ if and only if $\cos\frac{k\pi}{m-1}\geq |\lambda_i|.$ By definition of $k_i$ and Theorem \ref{thm:zdikms}, 
\[d(\mathcal{K}_m(A))=n+(k_1-1)+\cdots+(k_n-1)=k_1+\cdots+k_n.\]\end{proof}

Theorem \ref{thm:zdikms} and Corollary \ref{cor:zdikms} are generalizations of \cite[Theorem 2.1]{hG14a} for nonsingular $A$.

\subsection{Circularity of the numerical range of $\mathcal{K}_m(A)$}


The Kippenhahn polynomial of $A\in M_m$ is the homogeneous polynomial \[p_A(x,y,z)=\textup{det}(x\textup{Re}(A)+y\textup{Im}(A)+z I_m).\] The convex hull of the real points of the dual curve of $p_A(x,y,z)=0$ is $W(A)$ \cite[Theorem 10]{kipp}.

\begin{lemma}\label{lemdet}
Let $m,n\in \mathbb N$ where $m\geq 3$ and let $A\in M_n  $. Then $\det[\alpha \mathcal{K}_m(A)+\beta \mathcal{K}_m(A)^*]=(-\alpha\beta)^n \det(A^*A)\det[\alpha J_{m-2}(0)\otimes (A)+\beta [J_{m-2}(0)\otimes A]^*-(\alpha+\beta) I_{m-2}\otimes AA^*]$. In particular, \[\label{det}p_{\mathcal{K}_m(A)}(1,y,0)=\dfrac{(-1)^n(1+y^2)^n\det(A^*A)}{4^n}h(y)\]
where $h(y):=\det\left[\frac{1-iy}{2} J_{m-2}(0)\otimes (A)+\frac{1+iy}{2} [J_{m-2}(0)\otimes A]^*- I_{m-2}\otimes AA^*\right]$.
\end{lemma}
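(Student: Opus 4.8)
The plan is to collapse everything to the determinant of a block tridiagonal matrix and then peel off a $2n$-by-$2n$ corner with a Schur complement. First I would invoke Lemma \ref{*cong}: since the matrix $S$ there is unit upper triangular, $\det S=\det S^*=1$, so $\det[\alpha\mathcal{K}_m(A)+\beta\mathcal{K}_m(A)^*]$ equals $\det M$, where
\[M:=\alpha J_m(0)\otimes A+\beta[J_m(0)\otimes A]^*-(\alpha+\beta)(I_{m-1}\oplus[0])\otimes AA^*.\]
Writing $M$ out block-entrywise, it is block tridiagonal with superdiagonal blocks $\alpha A$, subdiagonal blocks $\beta A^*$, diagonal blocks $-(\alpha+\beta)AA^*$ in positions $1,\dots,m-1$, and a $0$ block in position $(m,m)$. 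The key structural observation is that the leading $(m-2)$-by-$(m-2)$ block principal submatrix of $M$ is exactly
\[M':=\alpha J_{m-2}(0)\otimes A+\beta[J_{m-2}(0)\otimes A]^*-(\alpha+\beta)I_{m-2}\otimes AA^*,\]
the matrix whose determinant appears on the right-hand side.

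Next, assuming temporarily that $A$ is nonsingular and $\alpha\beta\neq0$, I would partition $M$ so that the trailing $2n$-by-$2n$ corner is $B=\left[\begin{smallmatrix}-(\alpha+\beta)AA^* & \alpha A\\ \beta A^* & 0\end{smallmatrix}\right]$. Multiplying $B$ on the right by a unit block-triangular factor turns it into the anti-block-diagonal matrix $\left[\begin{smallmatrix}0 & \alpha A\\ \beta A^* & 0\end{smallmatrix}\right]$, whose determinant is $(-1)^n\det(\alpha A)\det(\beta A^*)=(-\alpha\beta)^n\det(A^*A)$; this already produces the scalar prefactor. The crucial computation is the explicit inverse
\[B^{-1}=\begin{bmatrix}0 & (\beta A^*)^{-1}\\ (\alpha A)^{-1} & (\alpha A)^{-1}(\alpha+\beta)AA^*(\beta A^*)^{-1}\end{bmatrix},\]
whose top-left block is $0$. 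Since the only coupling between the corner $B$ and the remaining blocks occurs at positions $(m-2,m-1)$ and $(m-1,m-2)$, the Schur-complement correction $M_{12}B^{-1}M_{21}$ reduces to a single block equal to $\alpha A\,(B^{-1})_{11}\,\beta A^*$, which vanishes because $(B^{-1})_{11}=0$. Hence the Schur complement of $B$ in $M$ is exactly $M'$, and the Schur determinant formula gives $\det M=\det B\cdot\det M'=(-\alpha\beta)^n\det(A^*A)\det M'$, which is the first displayed identity. To remove the temporary nonsingularity and $\alpha\beta\neq0$ hypotheses, I would note that both sides are polynomials in the real and imaginary parts of the entries of $A$ and in $\alpha,\beta$, and the identity holds on the dense open set where $A$ is invertible and $\alpha\beta\neq0$, hence everywhere by continuity.

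Finally, for the ``in particular'' statement I would specialize $\alpha=\frac{1-iy}{2}$ and $\beta=\frac{1+iy}{2}$, for which $\alpha\mathcal{K}_m(A)+\beta\mathcal{K}_m(A)^*=\textup{Re}(\mathcal{K}_m(A))+y\,\textup{Im}(\mathcal{K}_m(A))$, so that its determinant is precisely $p_{\mathcal{K}_m(A)}(1,y,0)$. A short computation gives $\alpha\beta=\frac{1+y^2}{4}$ and $\alpha+\beta=1$; substituting these into the main formula turns $(-\alpha\beta)^n$ into $\frac{(-1)^n(1+y^2)^n}{4^n}$ and turns the last determinant into $h(y)$, yielding the stated expression. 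The main obstacle is the Schur-complement step: one must recognize that the off-diagonal coupling of the corner $B$ meets only the $(1,1)$ block of $B^{-1}$, which happens to vanish, so the correction term disappears and the surviving principal block is already in the desired form $M'$.
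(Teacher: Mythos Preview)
Your proof is correct, but it takes a different route from the paper. After the common first step (Lemma~\ref{*cong} reduces the problem to computing $\det M$ for the block tridiagonal matrix $M$), the paper simply \emph{swaps the last two block columns} of $M$. This swap contributes the sign $(-1)^n$; in the resulting matrix the last block row is $[0,\ldots,0,\beta A^*]$, so one factor of $\det(\beta A^*)$ peels off, and in the remaining $(m-1)\times(m-1)$ block the last block column is $[0,\ldots,0,\alpha A]^T$, so a factor of $\det(\alpha A)$ peels off, leaving exactly $\det M'$. Thus the paper's argument is a two-line block-triangular expansion that works for arbitrary $A,\alpha,\beta$ with no invertibility hypothesis and no limiting step.

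Your Schur-complement argument reaches the same factorization by exploiting the fortunate vanishing of $(B^{-1})_{11}$; it is equally valid, and the observation that the coupling $M_{12}B^{-1}M_{21}$ hits only this zero block is the genuine content of your proof. The cost is the temporary assumption that $A$ is invertible and $\alpha\beta\neq0$, plus the density/continuity patch at the end. The paper's column-swap trick is more elementary and avoids that detour, while your approach has the virtue of identifying the Schur complement of the trailing $2n\times 2n$ corner explicitly as $M'$, which connects naturally with the Haynsworth-type inertia computations used elsewhere in the paper (e.g.\ Theorem~\ref{thm:zdikms}).
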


\begin{proof}
Let $\alpha,\beta\in\mathbb C$. By Lemma \ref{*cong}, there exists unit upper triangular $S\in M_{mn}  $ such that $S(\alpha \mathcal{K}_m(A)+\beta\mathcal{K}_m(A)^*)S^*$ is equal to \begin{equation}\label{SS*}\begin{bmatrix} -(\alpha+\beta) AA^*& \alpha A& 0& \cdots & 0\\ 
\beta A^* & -(\alpha+\beta)AA^*& \ddots& \ddots & \vdots\\ 
0 & \beta A^*& \ddots& \alpha A  &0\\ 
\vdots & \ddots &  \ddots &-(\alpha+\beta)AA^*& \alpha A \\ 0 & \cdots & 0& \beta A^*& 0\end{bmatrix}.\end{equation}
To prove the first claim, interchange the last two block columns of \eqref{SS*} and then take the determinant.

For the second claim, note that \[p_{\mathcal{K}_m(A)}(1,y,0) = 
\det[\textup{Re}(\mathcal{K}_m(A))+y\textup{Im}(\mathcal{K}_m(A))]=\det[\alpha\mathcal{K}_m(A)+\beta\mathcal{K}_m(A)^*]\] where $\alpha=\frac{1}{2}(1-iy)$ and $\beta=\frac{1}{2}(1+iy).$ Applying the first claim completes the proof.\end{proof}

\begin{lemma}\label{singular}
Let $m,n\in \mathbb N$ where $m\geq 3$ and let $A\in M_n  $. If the boundary of $W(\mathcal{K}_m(A))$ contains an elliptic arc, then $A$ is singular.
\end{lemma}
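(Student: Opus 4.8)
The plan is to argue by contrapositive: assuming $A$ is nonsingular, I will show that the boundary of $W(\mathcal{K}_m(A))$ contains no elliptic arc, which proves the lemma. The natural tool here is the Kippenhahn polynomial $p_{\mathcal{K}_m(A)}(x,y,z)$, whose dual curve determines $W(\mathcal{K}_m(A))$ via \cite[Theorem 10]{kipp}. An elliptic arc on the boundary of $W(\mathcal{K}_m(A))$ corresponds, by the standard Kippenhahn theory, to a linear factor of the Kippenhahn polynomial (equivalently, a point of the dual curve that is a genuine ellipse/point component). So the strategy is to show that when $A$ is nonsingular, $p_{\mathcal{K}_m(A)}(x,y,z)$ has no linear factor, or more precisely that its irreducible decomposition contains no factor whose dual is an ellipse.

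First I would exploit Lemma \ref{lemdet}, which already gives a clean factorization of $\det[\alpha\mathcal{K}_m(A)+\beta\mathcal{K}_m(A)^*]$. Setting $\alpha=\tfrac12(1-iy)$, $\beta=\tfrac12(1+iy)$ and $z=0$ (or homogenizing appropriately) expresses $p_{\mathcal{K}_m(A)}$ through the determinant of a block tridiagonal matrix built from $J_{m-2}(0)\otimes A$. The key structural fact I would extract is that $\mathcal{K}_m(A)$, via the $*$-congruence of Lemma \ref{*cong}, is tied to $J_m(0)\otimes A$, which is a nilpotent (in particular singular) matrix with a very rigid Jordan structure. The plan is to use nonsingularity of $A$ to control the factorization of $h(y)$ (and its homogenization): when $A$ is invertible, $\det(A^*A)\neq 0$ and the block tridiagonal determinant should factor over the eigenvalues $|\lambda_i|$ of $A$ in a way that produces only irreducible quadratic (or higher) factors, none of which yields an ellipse on the boundary.

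The cleanest route is probably via the normal/triangularization reduction implicit in Corollary \ref{cor:zdikms}: since $2\textup{Re}(J_{m-2}(0)\otimes A)-2(I_{m-2}\otimes AA^*)$ decomposes (after unitary reduction of $A$ to triangular or, in the normal case, diagonal form) into tridiagonal blocks indexed by the singular data of $A$, the Kippenhahn polynomial factors correspondingly into pieces associated to scalar KMS matrices $\mathcal{K}_{m-1}$ or $\mathcal{K}_m$ of the individual eigenvalues/singular values. I would then invoke the scalar result \cite[Theorem 2.3]{hG13}: for $m\geq 3$ and $a\neq 0$, $W(\mathcal{K}_m(a))$ is \emph{not} a disk, and more relevantly its boundary contains no elliptic arc (its Kippenhahn curve has no ellipse component for $m\geq 3$). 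Combining the factors, the boundary of $W(\mathcal{K}_m(A))$ is the outer boundary of a union of such non-elliptic pieces, so it contains no elliptic arc.

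The hard part will be making precise the translation between ``elliptic arc on $\partial W$'' and ``linear/ellipse factor of the Kippenhahn polynomial,'' and then verifying that nonsingularity of $A$ genuinely rules out such a factor in the block tridiagonal determinant from Lemma \ref{lemdet}. In particular, I must ensure that the degeneracies allowing an elliptic arc only arise when $\det(A^*A)=0$, i.e.\ when some eigenvalue/singular value of $A$ vanishes and a block of the tridiagonal matrix collapses to introduce a size-one (hence elliptic) Kippenhahn component. I expect the main technical obstacle to be handling the \emph{non-normal} case, where $A$ cannot be diagonalized unitarily and the clean eigenvalue factorization of Corollary \ref{cor:zdikms} is unavailable; there I would fall back on analyzing the factorization of $h(y)$ directly from \eqref{det}, showing that the presence of the invertible factor $\det(A^*A)$ forces every irreducible component of $p_{\mathcal{K}_m(A)}$ to have degree at least three (for $m\geq 3$), which precludes an ellipse in the dual curve.
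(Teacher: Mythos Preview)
Your plan has a genuine gap, and it is exactly the one you flag yourself: the non-normal case. For normal $A$ the reduction to $\bigoplus_i \mathcal{K}_m(\lambda_i)$ is fine, but for general nonsingular $A$ there is no unitary decomposition of $\mathcal{K}_m(A)$ into scalar KMS blocks, and nothing in Lemma~\ref{lemdet} or Corollary~\ref{cor:zdikms} supplies one. Your fallback --- ``show that every irreducible factor of $p_{\mathcal{K}_m(A)}$ has degree at least three'' --- is the entire content of the lemma restated, and you give no mechanism for proving it from the formula for $h(y)$. There is also a smaller but real issue even in the normal case: \cite[Theorem~2.3]{hG13} only asserts that $W(\mathcal{K}_m(a))$ is not a \emph{disk} for $m\geq 3$; it does not say the Kippenhahn curve has no conic component, so you would still owe an argument (via rotational symmetry and irreducibility) that an elliptic arc forces a full circle and hence a disk. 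Finally, your opening identification is off: an elliptic arc on $\partial W$ corresponds to a \emph{degree-two} conic factor of the Kippenhahn polynomial, not a linear one; you correct course later, but the slip suggests the duality picture should be pinned down before proceeding.

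The paper avoids all of this by arguing directly rather than by contrapositive, and never reducing to the scalar case. Suppose $\partial W(\mathcal{K}_m(A))$ contains an arc of an ellipse $E=W(B_0)$ with $B_0\in M_2$. By Kippenhahn duality and B\'ezout, $p_{B_0}$ divides $p_{\mathcal{K}_m(A)}$; specializing to $(1,i,z)$ shows $\det(B_0+zI_2)$ divides $z^{mn}$, so $B_0$ is nilpotent and $E$ is a circle $\{|z|\le b\}$. Replace $B_0$ by $\mathcal{K}_2(2bI_n)$ and form the quotient $q=p_{\mathcal{K}_m(A)}/p_{\mathcal{K}_2(2bI_n)}$. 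Lemma~\ref{lemdet} gives $q(1,y,0)=\det(A^*A)\,h(y)/(4^n b^{2n})$, hence $q(1,i,0)=(-1)^{(m-2)n}\det(A^*A)^{m-1}/(4^n b^{2n})$; but also $q(1,i,z)=z^{(m-2)n}$, so $q(1,i,0)=0$ since $m\ge 3$. Equating the two forces $\det(A^*A)=0$. The whole argument is a single determinant evaluation at $(1,i,0)$ and uses nothing about the structure of $A$ beyond its appearance in Lemma~\ref{lemdet}; no normality, no factorization of $h(y)$, and no appeal to the scalar KMS result are needed.
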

\begin{proof}
Let $E $ be an elliptic disk such that the boundary of $\mathcal{K}_m(A)$ contains an arc of $E$. There exists $B_0\in M_2  $ such that $W(B_0)=E$. By duality and Bezout's Theorem \cite[Theorem 3.9]{fK92}, $p_{B_0}(x,y,z)$ divides $p_{\mathcal{K}_m(A)}(x,y,z).$ In particular, $p_{B_0}(1,i,z)=\det(B_0+zI_n)$ divides $p_{\mathcal{K}_m(A)}(1,i,z)=\det(\mathcal{K}_m(A)+zI_n)=z^{mn}.$ Hence, all eigenvalues of $B_0$ are zero. By a unitary transformation, assume without loss of generality that $B_0=\mathcal{K}_2(2b)=\begin{bmatrix} 0& 2b\\ 0& 0\end{bmatrix}$ for some $b>0.$ Consider $B_0\otimes I_n=\mathcal{K}_2(2bI_n)$. Then $W(\mathcal{K}_2(2bI_n))=\{z\in \mathbb C: |z|\leq b\}=W(B_0)=E.$ By duality and Bezout's Theorem, $p_{\mathcal{K}_2(2bI_n)}(x,y,z)$ divides $p_{\mathcal{K}_m(A)}(x,y,z)$. Let $q(x,y,z):=\dfrac{p_{\mathcal{K}_m(A)}(x,y,z)}{p_{\mathcal{K}_2(2bI_n)}(x,y,z)}$. By properties of determinants of Kronecker products, \[p_{\mathcal{K}_2(2b I_n)}(x,y,z)=\det\left( \begin{bmatrix} z & (x-iy)b\\ (x+iy)b& z\end{bmatrix}\otimes I_n\right)=(z^2-b^2(x^2+y^2))^n.\] 
Lemma \ref{lemdet} implies that 
\[ q(1,y,0)=\dfrac{\det(A^*A)}{4^nb^{2n}}h(y)\] where $h(y):=\det\left[\frac{1-iy}{2} J_{m-2}(0)\otimes (A)+\frac{1+iy}{2} [J_{m-2}(0)\otimes A]^*- I_{m-2}\otimes AA^*\right]$. In particular, \begin{equation}\label{nonzero}q(1,i,0)=\dfrac{\det(A^*A)}{4^nb^{2n}}h(i)=\dfrac{(-1)^{(m-2)n}\det(A^*A)^{m-1}}{4^nb^{2n}}.\end{equation}
Now, $q(1,i,z)=\dfrac{\det({\cal K}_m(A)+zI_{mn})}{\det({\cal K}_2(A)+zI_{2n})}=\dfrac{z^{mn}}{z^{2n}}=z^{(m-2)n}$, and so $q(1,i,0)=0$ since $m\geq 3$. Hence, \eqref{nonzero} implies that $A$ is singular.
\end{proof}
\begin{theorem}\label{main}
Let $m,n\in \mathbb N$ where $m\geq 2$ and $A\in M_n  $. If $A$ is nonsingular, then the following are equivalent:
\begin{enumerate}
\item[(i)] $W(\mathcal{K}_m(A)))$ is a circular disk centered at $0$.
\item[(ii)] The boundary of $W(\mathcal{K}_m(A))$ contains an elliptic arc.
\item[(iii)] $m=2$.
\end{enumerate}
\end{theorem}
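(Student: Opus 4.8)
The plan is to establish the cycle of implications $(iii)\Rightarrow(i)\Rightarrow(ii)\Rightarrow(iii)$, which packages the new content neatly with the help of results already proved. The implication $(iii)\Rightarrow(i)$ is essentially the block analogue of the base case and should follow from a direct computation: when $m=2$, $\mathcal{K}_2(A)$ has the form $\left[\begin{smallmatrix}0&A\\0&0\end{smallmatrix}\right]$, and via the singular value decomposition of $A$ one sees that $\mathcal{K}_2(A)$ is unitarily similar to $\bigoplus_{i=1}^n \mathcal{K}_2(\sigma_i)$, a direct sum of $2$-by-$2$ nilpotent Jordan-type blocks. Each such summand has numerical range equal to a disk centered at $0$ of radius $\sigma_i/2$, and since $A$ is nonsingular every $\sigma_i>0$. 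The numerical range of a direct sum is the convex hull of the individual numerical ranges, hence a disk centered at $0$ of radius $\sigma_1/2$, proving $(i)$.

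The implication $(i)\Rightarrow(ii)$ is immediate and requires no work: the boundary of a nondegenerate circular disk is a circle, which is an elliptic arc (indeed, a full ellipse). I would simply remark this, taking care that the disk is genuinely nondegenerate, which is guaranteed by nonsingularity of $A$ as in the previous paragraph.

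The implication $(ii)\Rightarrow(iii)$ is where the real content lies, and it is handled entirely by Lemma~\ref{singular}. That lemma states precisely that if the boundary of $W(\mathcal{K}_m(A))$ contains an elliptic arc and $m\geq 3$, then $A$ must be singular. Taking the contrapositive, under our standing hypothesis that $A$ is nonsingular, the presence of an elliptic arc on the boundary forces $m\notin\{3,4,5,\ldots\}$, i.e.\ $m=2$ (recalling $m\geq 2$ throughout). Thus $(ii)\Rightarrow(iii)$ follows directly once I invoke Lemma~\ref{singular}, and no further calculation is needed.

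The main obstacle, had it not already been dispatched, would be exactly the $(ii)\Rightarrow(iii)$ step, since detecting elliptic arcs on the boundary of a numerical range is delicate and relies on the Kippenhahn/dual-curve machinery together with Bezout's theorem. Because Lemma~\ref{singular} already encapsulates this argument, the proof of the theorem itself reduces to assembling the three implications and verifying the easy base case $m=2$. I would therefore keep the write-up short, spending most attention on the clean computation of $W(\mathcal{K}_2(A))$ as a disk and citing Lemma~\ref{singular} for the hard direction.
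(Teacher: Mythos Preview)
Your proposal is correct and follows essentially the same approach as the paper: the same cycle $(iii)\Rightarrow(i)\Rightarrow(ii)\Rightarrow(iii)$, with Lemma~\ref{singular} doing the heavy lifting for $(ii)\Rightarrow(iii)$. The only difference is that you spell out the SVD computation for $(iii)\Rightarrow(i)$ in detail, whereas the paper simply declares $(iii)\Rightarrow(i)$ and $(i)\Rightarrow(ii)$ to be clear.
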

\begin{proof}
The implications (i)$\Rightarrow$(ii) and (iii)$\Rightarrow$(i) are clear. It suffices to show (ii)$\Rightarrow$(iii). If $m\geq 3$, then Lemma \ref{singular} implies that $A$ is singular, which is a contradiction. Thus, $m=2$.\end{proof}

The next example illustrates that the nonsingularity assumption on $A$ cannot be removed. 

\begin{example}
\normalfont
If $A=J_2(0)$, then $\mathcal{K}_3(A)$ is unitarily similar to $J_2(0)\oplus J_2(0)\oplus 0_2$. Since $0\in W(J_2(0))$, $W(\mathcal{K}_3(A))=W(J_2(0))$ and so $W(\mathcal{K}_3(A))$ is a circular disk. On the other hand, if $B=\textup{diag}(1,0)$, then $\mathcal{K}_3(B)$ is unitarily similar to $\mathcal{K}_3(1)\oplus 0_3$. Note that $0\in W(\mathcal{K}_3(1))$, and so $W(\mathcal{K}_3(B))=W(\mathcal{K}_3(1))$ is not a circular disk due to \cite[Theorem 2.3]{hG13} or Theorem \ref{main}.
\end{example}






\bigskip
{\bf Acknowledgment.} 
The author thanks the generous support of U.P. Diliman Mathematics Foundation, Inc. MacArthur and Josefina Delos Reyes Research Grant and the donors of the U.P. Diliman Francisco and Aurora Pizana Mamaril Professorial Chair. 

\end{document}